\numberwithin{equation}{section}
\patchcmd{\thesubsection}{\arabic}{\arabic}{}{}
\patchcmd{\@seccntformat}{\@secnumfont}{%
  \@secnumfont\expandafter\protect\csname format#1\endcsname}{}{}
\patchcmd{\@startsection}{\@afterindenttrue}{\@afterindentfalse}{}{}
\patchcmd{\subsection}{-.5em}{.3\linespacing}{}{}
\theoremstyle{plain}
\newtheorem{theorem}{Theorem}[section]
\newtheorem{proposition}[theorem]{Proposition}
\newtheorem{question}[theorem]{Question}
\newtheorem{lemma}[theorem]{Lemma}
\theoremstyle{remark}
\newcommand{\Ker}[1]{\ensuremath{\mathrm{Ker} (#1)}}
\newcommand{\cat}[1]{\ensuremath{\mathcal{#1}}}
\newcommand{\at}[2][]{\ensuremath{\mathrm{at}_{#1} (#2)}}
\newcommand{\END}[2][]{\ensuremath{\mathcal{E}\mathit{nd}_{#1} (#2)}}
\newcommand{\id}[1]{\ensuremath{\mathbf{1}_{#1}}}
\renewcommand{\dim}[2][]{\ensuremath{\mathrm{dim}_{#1}(#2)}}
\newcommand{\C}{\ensuremath{\mathbb{C}}}
\newcommand{\struct}[1]{\ensuremath{\mathcal{O}_{#1}}}
\newcommand{\HOM}[3][]{%
  \ensuremath{\mathcal{H}\mathit{om}_{#1}(#2, #3)}}
  \newcommand{\Res}[3][]{%
  \ensuremath{\mathrm{Res}_{#1}(#2, #3)}}
\newcommand{\DIFF}[4][]{\ensuremath{\mathcal{D}\mathit{iff}^{#1}_{#2}(#3,#4)}}  
\newcommand{\coh}[3]{\ensuremath{\mathrm{H}^{#1}(#2,#3)}}
\newcommand{\gcoh}[2]{\ensuremath{\mathrm{H}^{#1}(#2)}}
\newcommand{\drcoh}[3]{\ensuremath{\mathcal{H}}^{#1}_{#2}(#3)}
\begin{document}

\title[On the relative logarithmic connections]{ On the relative logarithmic connections and relative residue formula}

\author{Snehajit Misra and Anoop Singh}
\address{Department of Mathematics, Indian Institute of Technology, Bombay \\   Mumbai 400076, India.}
\email[Snehajit Misra]{snehajitm@math.iitb.ac.in}

\address{Tata Institute of Fundamental Research (TIFR) \\  Homi Bhabha Road, Colaba, Mumbai 400005, India.} 
\email[Anoop Singh]{anoops@math.tifr.res.in}

\subjclass[2010]{32C38, 14F10, 53C05, 14H15}
  \keywords{Relative logarithmic connection, Relative Chern class, Relative residue}

\begin{abstract}
We investigate the relative logarithmic connections on a holomorphic vector bundle over a complex analytic 
family. We give a sufficient condition for the existence of a relative logarithmic connection on a holomorphic vector bundle 
singular over a relative simple normal crossing divisor. We define the relative 
residue of relative logarithmic connection and express relative Chern classes of a holomorphic vector bundle in terms of 
relative residues.
\end{abstract}

\maketitle

\section{Introduction}
\label{Intro}
A theorem due to Atiyah \cite{A} and Weil \cite{W}, commonly known as \it Atiyah-Weil criterion\rm, says that a holomorphic vector bundle over a compact Riemann surface admits a holomorphic connection if and only if the degree of each of its indecomposable component is zero. However, it follows from \cite{A} that not every holomorphic vector bundle on a compact K\"{a}hler manifold admits a holomorphic connection. 
Although, in \cite{BS}, the relative holomorphic connections on a holomorphic vector bundle over a complex analytic family has been studied, and a sufficient condition is given for the existence of relative holomorphic connections.

Further, in \cite{BDP}, the Atiyah-Weil criterion has been generalised in the logarithmic set up, that is, 
 a necessary and sufficient condition is given for a holomorphic vector bundle on a compact Riemann surface $X$ to admit a logarithmic connection singular along a fixed reduced effective divisor $D$ on $X$ with prescribed rigid residues along $D$. There is a well studied notion 
 of relative logarithmic connection on a holomorphic vector bundle \cite{D}. 
 
Let $\pi : X \longrightarrow S$ be a complex analytic family of compact connected complex manifolds of  fixed relative dimension $l$. Let $\dim{X} = m$ and $\dim{S}=n$ so that $m = n+l$.  
We fix  simple normal crossing (SNC) divisors $Y$ on $X$ and  $T$ on $S$ such that $\pi(Y) = T$ set-theoratically.  We say  $Y/T$ a {\bf relative SNC divisor} if the quotient sheaf 
$$\Omega^1_{X/S}(\log Y):= \frac{\Omega^1_X(\log Y)}{\pi^*\Omega^1_S(\log T)},$$
is locally free sheaf of rank $l=m-n$ on $X$,
 where $\Omega^1_X(\log Y)$ and $\Omega^1_S(\log T)$
are defined in subsection \ref{log_form} (for more details see \cite{BE}).
Note that $Y$ is a proper subset of $\pi^{-1}(T)$.

 In this article we try to answer the following question:

\begin{question}  Let $\pi:X\longrightarrow S$ be a surjective holomorphic proper submersion with connected fibers, and let $\varpi\,:\, E \,\longrightarrow\, X$ a holomorphic vector bundle. We fix a relative SNC divisor 
$Y/T$ over $X$. 
Is there a good criterion for existence of a relative logarithmic connection on $E$ singular along $Y/T$?
\end{question}

For each fiber $\pi^{-1}(s) = X_s$, $s\in S$, we set $Y_s:= X_s \cap Y$. Then $X_s$ intersect $Y$ transversally whenever $Y_s = X_s \cap Y \neq \emptyset$. Also $Y_s = \emptyset$ if and only if $s \notin T$.

 In order to answer the question, we have studied relative logarithmic connection and relative logarithmic Atiyah bundle in section \ref{pre}, and we observe the following:
\begin{proposition}
[Proposition \ref{pro:3J}]
Let $\pi\,:\, X \,\longrightarrow\, S$ be a surjective holomorphic proper
submersion with connected fibers and $\varpi\,:\, E \,\longrightarrow\, X$ a 
holomorphic vector bundle. 
Let $Y/T$ is a relative SNC divisor over $X$.
Let $\nabla$ be a relative logarithmic connection 
on $E$. Then for every $s \,\in\, T$, we have a logarithmic connection $\nabla_s$
on the holomorphic vector bundle $E_s\,\longrightarrow\, X_s$.  In other words, we have a
family $\{\nabla_s\,\mid\, s \in S\}$ which consists of  logarithmic and holomorphic connections on the holomorphic 
family of vector bundles $\{E_s\,\longrightarrow\, X_s\,\mid\, s\in S\}$ depending on $s \in T$ and $s \notin T$.
\end{proposition}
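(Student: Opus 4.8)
The plan is to obtain $\nabla_s$ by restricting $\nabla$ to the fibre $X_s$ and to check that what one gets is an honest logarithmic connection on $E_s$, holomorphic precisely when $Y_s=\emptyset$. Throughout, let $j_s\colon X_s\hookrightarrow X$ denote the inclusion of the fibre over $s$ and $\mathcal{I}_s\subset\struct{X}$ its ideal sheaf, so $E_s=\pullback{j_s}{E}$ and $\struct{X}/\mathcal{I}_s\cong\dirimage{j_s}{\struct{X_s}}$. Recall (Section~\ref{pre}) that a relative logarithmic connection on $E$ is a $\C$-linear morphism $\nabla\colon E\to E\otimes_{\struct{X}}\Omega^1_{X/S}(\log Y)$ obeying the Leibniz rule with respect to the relative differential $d_{X/S}\colon\struct{X}\to\Omega^1_{X/S}(\log Y)$.

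\textbf{Step 1 (the geometric input).} The key point is the canonical isomorphism
$$\pullback{j_s}{\Omega^1_{X/S}(\log Y)}\;\cong\;\Omega^1_{X_s}(\log Y_s),$$
compatible with the differentials in the sense that $d_{X/S}$ induces $d_{X_s}$ on $\struct{X_s}=\struct{X}/\mathcal{I}_s$. Because $\pi$ is a submersion and $X_s$ meets $Y$ transversally (so $Y_s$, when nonempty, is an SNC divisor on $X_s$), this reduces to a computation in local coordinates adapted simultaneously to $\pi$ and to $Y$: the relative SNC hypothesis provides a local frame of $\Omega^1_{X/S}(\log Y)$ consisting of relative $1$-forms and relative logarithmic $1$-forms along $Y$, and restricting this frame to $X_s$ yields the corresponding frame of $\Omega^1_{X_s}(\log Y_s)$ — the absolute logarithmic cotangent sheaf of $X_s$ along $Y_s$, which reduces to $\Omega^1_{X_s}$ when $Y_s=\emptyset$, i.e.\ when $s\notin T$. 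I expect this identification to be the main technical point; if it is already recorded in Section~\ref{pre}, I would just cite it.

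\textbf{Step 2 (descending $\nabla$ to the fibre).} Near $X_s$ the ideal $\mathcal{I}_s$ is generated by pullbacks $\pullback{\pi}{g}$ of functions on $S$ vanishing at $s$, and $d_{X/S}(\pullback{\pi}{g})=0$ since $\pullback{\pi}{\Omega^1_S(\log T)}$ maps to $0$ in $\Omega^1_{X/S}(\log Y)$ by the very definition of the quotient sheaf. Together with the Leibniz rule this gives $d_{X/S}(\mathcal{I}_s)\subset\mathcal{I}_s\cdot\Omega^1_{X/S}(\log Y)$; hence for a local section $f$ of $\mathcal{I}_s$ and a local section $v$ of $E$ one has $\nabla(fv)=f\,\nabla(v)+v\otimes d_{X/S}(f)\in\mathcal{I}_s\cdot\big(E\otimes\Omega^1_{X/S}(\log Y)\big)$. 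Thus $\nabla$ passes to the quotients by $\mathcal{I}_s$, and by Step~1 this quotient map is precisely a $\C$-linear morphism $\nabla_s\colon E_s\to E_s\otimes_{\struct{X_s}}\Omega^1_{X_s}(\log Y_s)$.

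\textbf{Step 3 (conclusion).} The Leibniz identity for $\nabla_s$ follows from that of $\nabla$ and the compatibility of differentials in Step~1, so $\nabla_s$ is a logarithmic connection on $E_s\to X_s$ singular along $Y_s$. For $s\in T$ we have $Y_s=X_s\cap Y\neq\emptyset$ and $\nabla_s$ is genuinely logarithmic; for $s\notin T$ we have $Y_s=\emptyset$, so $\Omega^1_{X_s}(\log Y_s)=\Omega^1_{X_s}$ and $\nabla_s$ is a holomorphic connection. Letting $s$ run over $S$ produces the asserted family $\{\nabla_s\mid s\in S\}$. Alternatively and more concretely, writing $\nabla=d_{X/S}+\omega$ in a local trivialisation of $E$, with $\omega$ a matrix of relative logarithmic $1$-forms, one restricts $\omega$ along $j_s$ and checks that $d_{X_s}+\pullback{j_s}{\omega}$ is independent of the trivialisation, since the gauge law $\omega\mapsto g^{-1}\omega g+g^{-1}d_{X/S}g$ is preserved under restriction by Step~1; one could equally restrict the relative logarithmic Atiyah sequence of $E$ fibrewise and note that a splitting restricts to a splitting.
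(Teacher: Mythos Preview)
Your argument is correct. The key identifications in Step~1 and the descent in Step~2 are exactly what is needed, and your remark about $Y_s=\emptyset$ for $s\notin T$ handles the dichotomy in the statement.

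The paper takes a slightly different route: rather than working directly with $\nabla$ as a first-order operator and passing to the quotient by $\mathcal{I}_s$, it reformulates the connection as a holomorphic splitting of the relative logarithmic Atiyah sequence
\[
0 \longrightarrow \END[\struct{X}]{E} \longrightarrow \cat{A}t_S(E)(-\log Y) \longrightarrow \cat{T}_{X/S}(-\log Y) \longrightarrow 0,
\]
and then observes that the restriction maps $r_s$ fit into a commutative diagram with the logarithmic Atiyah sequence of $E_s$ over $X_s$, so that the splitting restricts to a splitting. You mention this as an alternative at the end of Step~3, so you have both arguments. Your primary approach has the advantage of being self-contained and making the r\^ole of $\mathcal{I}_s$ and of the vanishing $d_{X/S}(\pullback{\pi}{g})=0$ completely explicit; the paper's Atiyah-sequence phrasing is terser and slots directly into the later use of Atiyah classes, but leaves the analogue of your Step~1 (that $\cat{T}_{X/S}(-\log Y)$ restricts to $\cat{T}_{X_s}(-\log Y_s)$) implicit.
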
 
We also give a sufficient condition for existence of relative logarithmic connection on a holomorphic vector bundle. More specifically we prove the following:
\begin{theorem}
[Theorem \ref{thm:2}]
Let $\pi : X \longrightarrow S$ be a surjective holomorphic proper submersion of complex manifolds with connected fibres and $E$ be a holomorphic vector bundle on $X$.  Let $Y/T$ be a relative SNC divisor over $X$. Suppose that the vector bundle $E_s:=E\vert_{X_s}$ admits a logarithmic connection singular along $Y_s$ for each $s\in S$, and $$ \coh{1}{S}{ \pi_*(\Omega^1_{X/S}(\log Y) \otimes \END[\struct{X}]{E})} = 0.$$ Then, $E$ admits a relative logarithmic connection singular along $Y$.
\end{theorem}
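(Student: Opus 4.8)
The plan is to translate the existence of a relative logarithmic connection on $E$ into the vanishing of a single obstruction class on $X$, the relative logarithmic Atiyah class, and then to annihilate that class by feeding the fibrewise hypothesis into the Leray spectral sequence of $\pi$. First, recall from Section~\ref{pre} the relative logarithmic Atiyah exact sequence of locally free $\struct{X}$-modules attached to $(E,Y/T)$,
\[
0 \longrightarrow \END[\struct{X}]{E} \longrightarrow \At{E} \longrightarrow T_{X/S}(-\log Y) \longrightarrow 0 ,
\]
where $T_{X/S}(-\log Y) := \HOM[\struct{X}]{\Omega^1_{X/S}(\log Y)}{\struct{X}}$ is locally free of rank $l$ because $Y/T$ is a relative SNC divisor. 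A relative logarithmic connection on $E$ singular along $Y$ is exactly an $\struct{X}$-linear splitting of this sequence, so $E$ admits such a connection if and only if the corresponding extension class $\at{E} \in \mathrm{Ext}^1_{\struct{X}}\big(T_{X/S}(-\log Y),\, \END[\struct{X}]{E}\big)$ vanishes; since $T_{X/S}(-\log Y)$ is locally free, this extension group is the sheaf cohomology $\coh{1}{X}{\mathcal F}$, where $\mathcal F := \Omega^1_{X/S}(\log Y) \otimes \END[\struct{X}]{E}$. Thus it suffices to prove $\at{E} = 0$ in $\coh{1}{X}{\mathcal F}$.

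I would then invoke the Leray spectral sequence $\coh{p}{S}{R^q\pi_*\mathcal F} \Rightarrow \coh{p+q}{X}{\mathcal F}$, whose exact sequence of low-degree terms is
\[
0 \longrightarrow \coh{1}{S}{\pi_*\mathcal F} \longrightarrow \coh{1}{X}{\mathcal F} \xrightarrow{\rho} \coh{0}{S}{R^1\pi_*\mathcal F} \longrightarrow \coh{2}{S}{\pi_*\mathcal F} .
\]
The hypothesis is precisely that $\coh{1}{S}{\pi_*\mathcal F} = \coh{1}{S}{\pi_*(\Omega^1_{X/S}(\log Y) \otimes \END[\struct{X}]{E})} = 0$, so $\rho$ is injective and $\at{E} = 0$ if and only if $\rho(\at{E}) = 0$. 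Moreover, since $\pi$ is proper and $\mathcal F$ coherent, $R^1\pi_*\mathcal F$ is a coherent $\struct{S}$-module by Grauert's direct image theorem, and $\mathcal F$ is flat over $S$ because $\pi$ is a submersion, so the theorem on cohomology and base change applies to $\pi$ and $\mathcal F$.

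It then remains to show $\rho(\at{E}) = 0$, and this I would do fibrewise. For $s \in S$ the inclusion $X_s \hookrightarrow X$ pulls the relative logarithmic Atiyah sequence back to the absolute logarithmic Atiyah sequence of $(E_s, Y_s)$ on $X_s$; here one uses the identification $\restrict{\Omega^1_{X/S}(\log Y)}{X_s} \cong \Omega^1_{X_s}(\log Y_s)$ underlying Proposition~\ref{pro:3J} (with $Y_s = \emptyset$ and the right-hand side equal to $\Omega^1_{X_s}$ when $s \notin T$), together with the transversality of $X_s$ and $Y$. Consequently the image of $\rho(\at{E})$ under the base-change homomorphism from the fibre of $R^1\pi_*\mathcal F$ at $s$ to $\coh{1}{X_s}{\restrict{\mathcal F}{X_s}}$ is the logarithmic Atiyah class $\at{E_s}$ of $E_s$, which vanishes because $E_s$ admits a logarithmic connection singular along $Y_s$. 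Since this holds for every $s \in S$, the base change theorem forces the section $\rho(\at{E})$ of the coherent sheaf $R^1\pi_*\mathcal F$ to vanish; hence $\at{E} = 0$ and $E$ admits a relative logarithmic connection singular along $Y$.

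I expect the last paragraph to be the main obstacle. The delicate points are, first, verifying that restricting the relative logarithmic Atiyah sequence to a fibre produces the absolute logarithmic Atiyah sequence there — this is exactly where the relative SNC hypothesis and the transversality of $X_s$ with $Y$ are needed — and second, running the cohomology and base change argument over the proper submersion $\pi$ so as to upgrade the fibrewise vanishing of $\at{E_s}$ to the vanishing of the global section $\rho(\at{E})$. By contrast, the identification of the obstruction class and the Leray argument are routine.
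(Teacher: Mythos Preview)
Your proposal is correct and follows essentially the same route as the paper: identify the obstruction as the relative logarithmic Atiyah class in $\coh{1}{X}{\Omega^1_{X/S}(\log Y)\otimes\END[\struct{X}]{E}}$, feed it into the low-degree Leray exact sequence for $\pi$, use the fibrewise hypothesis to kill its image in $\coh{0}{S}{R^1\pi_*\mathcal F}$, and then invoke the vanishing of $\coh{1}{S}{\pi_*\mathcal F}$. The only cosmetic difference is that the paper realizes the Atiyah class as $\Phi(1)$ for a connecting homomorphism $\Phi$ coming from an auxiliary short exact sequence obtained by tensoring \eqref{eq:3} with $\Omega^1_{X/S}(\log Y)$ and pulling back along $\struct{X}\cdot\mathrm{Id}\hookrightarrow\Omega^1_{X/S}(\log Y)\otimes\cat{T}_{X/S}(-\log Y)$, rather than taking the extension class directly; and the paper asserts the step ``fibrewise vanishing $\Rightarrow q_1(\Phi(1))=0$'' without the base-change discussion you (rightly) flag as the delicate point.
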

In the final section, we introduce the notion of relative residue, and motivated by a result due to Ohtsuki \cite[Theorem 3]{O}, we prove the following result in the relative context. For the notations in the following theorem see section \ref{rel_residue}.
\begin{theorem}
[Theorem \ref{thm:3}]
Let $\pi : X \longrightarrow S$ be a surjective holomorphic proper submersion of complex manifolds with connected fibres and $E$ be a holomorphic vector bundle on $X$. Assume that $X$ is compact. Let $Y/T$ be  a relative SNC divisor over $X$.
Let $D$ be a relative logarithmic connection on $E$
singular over $Y/T$. Then, we have following relation in 
$\drcoh{2k}{dR}{X/S}(S)$
\begin{equation}
\label{eq:13.1}
C^S_k(E) = (-1)^k \{ \sum_{I \in J^k} \sum_{\alpha} \Res[X/S]{D}{Y_{I^*}}^{k, \alpha} C^p_S(Y^\alpha_{I^*})\} \prod_{m = 1}^p C^1_S(Y_{i^*_m})^{a_m -1},
\end{equation}
where $C^S_k(E)$ denote the $k$-th relative Chern class of $E$.
\end{theorem}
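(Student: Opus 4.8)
The plan is to reduce \eqref{eq:13.1} to Ohtsuki's absolute residue formula \cite[Theorem 3]{O}, applied on each fibre of $\pi$, and then to promote the resulting family of fibrewise identities to a single identity in $\drcoh{2k}{dR}{X/S}(S)$ by exploiting that the relative de Rham cohomology of a proper submersion is locally free. Essentially all of the geometry is already contained in \cite{O}; the task here is to check that every term appearing in \eqref{eq:13.1} is compatible with restriction to fibres and that the combinatorial data of $Y/T$ is locally constant over $S$.

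First I would assemble the fibrewise picture. By Proposition \ref{pro:3J} the relative logarithmic connection $D$ restricts, for each $s\in S$, to a logarithmic connection $D_s$ on $E_s\longrightarrow X_s$ singular along $Y_s=Y\cap X_s$, and $D_s$ is an ordinary holomorphic connection precisely when $s\notin T$ (equivalently $Y_s=\emptyset$). Since $\pi$ is a submersion transverse to $Y$, every nonempty $Y_s$ is a simple normal crossing divisor on $X_s$ whose combinatorial type --- the poset of strata, the index set $J^k$ and the multiplicities $a_m$ --- does not vary with $s$, and the simultaneous generalized eigenbundle decomposition of the commuting residues of $D$ along a stratum $Y_{I^*}$ restricts to that of $D_s$ along $Y_{I^*}\cap X_s$. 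Thus the loci $Y^{\alpha}_{I^*}$ and the residue invariants $\Res[X/S]{D}{Y_{I^*}}^{k,\alpha}$ become, after restriction to $X_s$, exactly the data entering Ohtsuki's formula for $(E_s,D_s,Y_s)$, so \cite[Theorem 3]{O} yields on each $X_s$ the identity obtained from \eqref{eq:13.1} by restriction to the fibre, now valid in $H^{2k}_{dR}(X_s)$ (the case $s\notin T$ reducing to $c_k(E_s)=0$, with every term on the right also restricting to $0$).

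Second I would globalise. Because $\pi$ is proper and a submersion, $R^{q}\pi_{*}\C$ is a local system (Ehresmann), and the relative holomorphic Poincar\'e lemma identifies $\drcoh{2k}{dR}{X/S}$ with $R^{2k}\pi_{*}\C\otimes_{\C}\struct{S}$, a locally free $\struct{S}$-module whose fibre at each $s$ is canonically $H^{2k}_{dR}(X_s)$; the induced map $\drcoh{2k}{dR}{X/S}(S)\longrightarrow H^{2k}_{dR}(X_s)$ is restriction of relative classes to the fibre (cohomology and base change). Since $S$ is reduced, a global section of a locally free $\struct{S}$-module vanishing in every fibre is $0$, so it suffices to know that the two sides of \eqref{eq:13.1} restrict, for each $s$, to the two sides of Ohtsuki's formula on $X_s$. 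This requires the naturality statements $C^S_k(E)|_{X_s}=c_k(E_s)$, $C^1_S(Y_{i^*_m})|_{X_s}=c_1\big(\struct{X_s}(Y_{i^*_m}\cap X_s)\big)$, $C^p_S(Y^{\alpha}_{I^*})|_{X_s}$ equal to the corresponding absolute Chern class on $X_s$, and the (immediate) compatibility of $\Res[X/S]{D}{\,\cdot\,}^{k,\alpha}$ with the residues of $D_s$; granting these, the two sides of \eqref{eq:13.1} agree in every fibre and hence in $\drcoh{2k}{dR}{X/S}(S)$.

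The main obstacle is precisely this naturality/base-change package rather than any new geometric input: one must verify that the relative Chern classes of $E$ and of the strata $Y^{\alpha}_{I^*}$, the relative Gysin pushforwards producing the self-intersection factors $C^1_S(Y_{i^*_m})^{a_m-1}$, and the relative residues all commute with the closed immersion $X_s\hookrightarrow X$, and that the stratification of $Y/T$ is genuinely locally constant over $S$, so that $J^k$ and the $a_m$ can be chosen uniformly. A self-contained alternative avoiding \cite{O} would instead represent $C^S_k(E)$ by the relative Chern form $P_k$ of a $C^{\infty}$ relative connection $\nabla$ on $E$, compare it on $X\setminus Y$ with the relative logarithmic Chern form of $D$ via a relative Chern--Simons transgression, and evaluate the boundary contributions along tubular neighbourhoods of the strata of $Y$ by a relative Poincar\'e--Lelong argument, pushing forward by $\pi_{*}$ (legitimate since $X$ is compact); there the difficult step is exactly the residue computation near the higher-codimension strata $Y_{I^*}$ together with the combinatorics producing $J^k$ and the exponents $a_m-1$, i.e. reproving \cite[Theorem 3]{O} in families, so I would favour the fibrewise reduction above.
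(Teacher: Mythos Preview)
Your proposal is correct and follows essentially the same route as the paper: reduce \eqref{eq:13.1} to Ohtsuki's absolute formula \cite[Theorem 3]{O} on each fibre $X_s$, then use local freeness of the relative de Rham cohomology sheaf together with base change to conclude that a section vanishing in every fibre is zero. The only cosmetic difference is that the paper works with the $\cat{C}^\infty_S$-module structure on $\drcoh{2k}{dR}{X/S}$ (arguing via the stalkwise isomorphism $\drcoh{2k}{dR}{X/S}_s\otimes_{\cat{C}^\infty_{S,s}}k(s)\cong\coh{2k}{X_s}{\C}$ obtained from topological proper base change), whereas you phrase things via the local system $R^{2k}\pi_*\C$ and the $\struct{S}$-module $R^{2k}\pi_*\C\otimes_\C\struct{S}$; both lead to the same fibrewise-injectivity statement, and your identification of the naturality checks (restriction of $C^S_k(E)$, of $C^p_S(Y^{\alpha}_{I^*})$, and of the residue invariants) is exactly what the paper's proof uses, albeit more tersely.
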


\section{Preliminaries}
\label{pre}

\subsection{Logarithmic forms}
\label{log_form}
Let $X$ be a connected smooth complex manifold of dimension at least 1. An effective divisor $D$ on $X$ is said to be a simple normal crossing or SNC in short, if $D$ is reduced, each irreducible component of $D$ is smooth, and for each point $x\in X$, there exists a local system $\bigl(U,z_1,z_2,\cdots,z_n\bigr)$ around $x \in U \subset X$ such that $D \cap U$ is given by the equation $z_1z_2\cdots z_r = 0$ for some integer $r$ with $1\leq r \leq n$. This means that the irreducible components of $D$ passing through $x$ are given by the equations $z_i=0$ for $i=1,2,\cdots,r$, and the these components intersect each other transversally.

For an integer $k\geq 0$ and for an SNC divisor $D$ on $X$, a section of $$\Omega^k_X( D) := \Omega^k_X\otimes_{\mathcal{O}_X}\mathcal{O}_X(D)$$ is called a meromorphic $k$-form on $X$. A meromorphic $k$-form $\alpha \in \Omega^k_X(\log D)(U)$ on an open set $U\subset X$ is said to have logarithmic pole along $D$ if it satisfies the following conditions:
\begin{enumerate}
\item \label{a}$\alpha$ is holomorphic on $U \setminus (U\cap D)$ and $\alpha$ has pole of order at most one along each irreducible component of $D$.

\item \label{b}
The condition \eqref{a} should also holds for $d\alpha$, where $d$ is the holomorphic exterior differential operator.
\end{enumerate}

We denote the sheaf of meromorphic $k$-forms on $X$ having logarithmic pole along $D$ by $\Omega^k_X(\log D)$, and call it sheaf of {\bf logarithmic} $k$-{\bf forms} on $X$
singular over $D$.

\subsection{Complex analytic families}
\label{CAF}
Let $(S,\struct{S})$ be a complex manifold of dimension $n$.  For each 
 $t \in S$, let there be given a compact connected complex manifold $X_t$ 
 of fixed dimension $l$. We say that the set $\{X_t : t \in S \}$ of compact
 connected complex manifolds is called a  \emph{complex analytic family of
 compact connected complex manifolds}, if there is a complex manifold 
 $(X, \struct{X})$ and a surjective holomorphic map $ \pi: X \to S $ of
 complex manifolds such that the followings hold;
 \begin{enumerate}
 \item \label{a1} $ \pi^{-1}(t) = X_t$, for all $t \in S$,
 \item \label{a2} $ \pi^{-1}(t)$ is a compact connected complex 
 submanifold of $X$, for all $t \in S$,
 \item \label{a3} the rank of the Jacobian matrix of $\pi$ is equal to 
 $n$ at each point of $X$. 
 \end{enumerate}
 
Note that, if such a $\pi$ exists, then $\pi : X \longrightarrow S$ is a surjective holomorphic proper submersion such that each fiber $\pi^{-1}(s) = X_s$ is connected for every $s\in S$.\\
Let $d\pi_S : TX \longrightarrow \pi^*T_S$ be the differential of $\pi$.
Then the sheaf of holomorphic sections of the subbundle 
$T(X/S):=\Ker{d\pi_S} \subset TX$
is called the relative tangent sheaf of $\pi$, denoted by $\cat{T}_{X/S}$. 

We have the following short exact sequence
$$0\longrightarrow \cat{T}_{X/S}\longrightarrow \cat{T}_X \xrightarrow{d\pi_S} \pi^*\cat{T}_S\longrightarrow 0.$$
of locally free $\struct{X}$-modules.

The dual $\cat{T}_{X/S}^*$ is called the relative cotangent sheaf of $\pi$ and it is denoted by $\Omega^1_{X/S}$. Dualizing the above short exact sequence we get
$$0\longrightarrow \pi^*\Omega^1_{S}\longrightarrow \Omega^1_X\longrightarrow \Omega^1_{X/S}\longrightarrow 0.$$

Note that both the relative tangent sheaf $\cat{T}_{X/S}$ and the relative cotangent sheaf $\Omega^1_{X/S}$ are locally free $\mathcal{O}_X$-modules of rank $l$.

\subsection{Relative logarithmic connection and Atiyah Bundle} 
The notion of relative logarithmic connection  was  introduced by P. Deligne 
in \cite{D}. For more details on logarithmic and meromorphic connections we refer \cite{D}, \cite{BM}. We recall the definition of relative logarithmic connection on a holomorphic vector bundle.

Let $E$ be a holomorphic vector bundle of rank $r$ over $X$.
A {\bf relative logarithmic connection} on $E$ singular along $Y$ is a first order holomorphic differential operator 
 $$D:E\longrightarrow E\otimes \Omega^1_{X/S}(\log Y)$$
 which satifies the Leibniz property
 \begin{equation*}
 D(f s) = f D(s) + \text{d}_{X/S}(f) \otimes s
 \end{equation*}
 where $s$ and  $f$ are local sections of $E$ and  $\struct{X}$, respectively.

  In \cite[section 2]{BS}, the notions of $S$-derivation, $S$-connection and $S$-differential operators have been introduced in the relative set up. 
  
  For a proper submersion $\pi : X \longrightarrow S$ as above, and for a vector bundle $E$ on $X$, we recall the following symbol exact sequence from \cite[Proposition 4.2]{BS},
$$
0\, \longrightarrow\, \END[\struct{X}]{E}\,\stackrel{\imath}{\longrightarrow}\, 
\DIFF[1]{S}{E}{E}\,\stackrel{\sigma_1}{\longrightarrow}\,
\cat{T}_{X/S} \otimes \END[\struct{X}]{E} \,\longrightarrow\, 0\, ,
$$
where $\sigma_1$ is the symbol morphism, and $\DIFF[1]{S}{E}{E}$ is the sheaf of first order $S$-differential operators on $E$.
Define a bundle $$\cat{A}t_S(E) := \sigma_1^{-1}(\cat{T}_{X/S} \otimes \id{E}),$$
which is known as {\bf relative Atiyah bundle} of $E$ 
and fits in to  the following {\bf Atiyah  exact sequence}
\begin{equation}\label{eq:2}
0\,\longrightarrow\, \END[\struct{X}]{E}\,\stackrel{\imath}{\longrightarrow}\, \cat{A}t_S(E)
\,\stackrel{\sigma_1}{\longrightarrow}\, 
\cat{T}_{X/S} \,\longrightarrow\, 0.
\end{equation}

Further, 
we define 
$$\cat{A}t_S(E)(-\log Y):= \sigma_1^{-1}( \id{E} \otimes \cat{T}_{X/S} \otimes \struct{X}(-Y)).$$
So, we have the following short exact sequence 
\begin{equation}
\label{eq:3}
0\,\longrightarrow\, \END[\struct{X}]{E}\,\stackrel{\imath}{\longrightarrow}\, \cat{A}t_S(E)(-\log Y)
\,\stackrel{\sigma_1}{\longrightarrow}\, 
\cat{T}_{X/S}(- \log Y) \,\longrightarrow\, 0,
\end{equation}
which we call {\bf relative logarithmic Atiyah exact sequence.}

The extension class of the logarithmic Atiyah exact sequence \eqref{eq:3} of a holomorphic 
vector bundle $E$ over $X$ is an element of cohomology
group
$$\coh{1}{X}{\HOM[\struct{X}]{\cat{T}_{X/S}(-\log Y)}{\END[\struct{X}]{E}}}.$$ This extension
class is called the {\bf relative logarithmic Atiyah class} of $E$, and it is denoted by $\at[S]{E}(\log Y)$. 
Note that
$$
\coh{1}{X}{\HOM[\struct{X}]{\cat{T}_{X/S}(-\log Y)}{\END[\struct{X}]{E}}}\,\cong \,
\coh{1}{X}{\Omega^1_{X/S}(\log Y) \otimes \END[\struct{X}]{E}}\, ,
$$
therefore, we have 
$$\at[S]{E}(\log Y) \in \coh{1}{X}{\Omega^1_{X/S}(\log Y) \otimes \END[\struct{X}]{E}}.$$

\subsection{Family of logarithmic connections}
Now, we describe that given a relative logarithmic connection gives a family of logarithmic connections.

Let $\varpi\,:\,E \,\longrightarrow\, X$ be a holomorphic vector bundle. For every $s \,\in\, S$,
the restriction of $E$ to $X_s \,=\, \pi^{-1}(s)$ is denoted by $E_s$.
Let $U$ be an open subset of $X$ and $\alpha\, :\,U \,\longrightarrow\, E$ a holomorphic section.
We denote by $r_s(\alpha)$ the restriction of $\alpha$ to $X_s\cap U$, whenever $U \cap
X_s\,\neq\, \emptyset$. Clearly, $r_s(\alpha)$ is a holomorphic section of $E_s$ over
$U \cap X_s$. The map $r_s \,:\, \alpha\,\longmapsto\, r_s(\alpha)$ induces, therefore, a 
homomorphism of $\C$-vector spaces from $E$ to $E_s$, which is denoted
by the same symbol $r_s$. Also, $X_s$ is a complex 
submanifold of $X$, so $\struct{X}|_{X_s}\,=\, \struct{X_s}$. We also have
the restriction map $r_s\,:\,\END[\struct{X}]{E}\,\longrightarrow\, \END[\struct{X_s}]{E_s}$.

Similarly, if $P\,:\, E\,\longrightarrow\, F$ is a first order $S$-differential operator,
where $F$ is a holomorphic vector bundle over $X$, then
the restriction map $r_s\,:\,E_s \,\longrightarrow\, F_s$ gives rise to a first order 
differential operator $P_s\,:\, E_s\,\longrightarrow\, F_s$ for every $s\, \in\, S$. Thus,
we have the restriction map $r_s\,:\,\DIFF[1]{S}{E}{F}\,\longrightarrow\, \DIFF[1]{\C}{E_s}{F_s}$.
In particular, for $E\,=\, F$, we have the restriction map 
$r_s\,:\,\DIFF[1]{S}{E}{E} \,\longrightarrow\, \DIFF[1]{\C}{E_s}{E_s}$ for every $s\,\in \,S$.
Since, the restriction of the relative tangent bundle $T(X/S)$ to each
fiber $X_s$ of $\pi$ is the tangent bundle $T(X_s)$ of the fiber
$X_s$, we have the restriction map $r_s\,:\,\cat{T}_{X/S}(- \log Y)\,\longrightarrow\, \cat{T}_{X_s}(- \log Y_s)$. 
 
Now, for every $s\,\in\, S$, the restriction maps gives a commutative diagram
\begin{equation}
\label{eq:cd1}
\xymatrix@C=2em{
0 \ar[r] & \END[\struct{X}]{E} \ar[d]^{r_s} \ar[r] & \cat{A}t_S(E)(-\log Y) 
\ar[d]^{r_s} \ar[r]^{\sigma_1} & \cat{T}_{X/S}(- \log Y) \ar[d]^{r_s} \ar[r] & 0 \\
0 \ar[r] & \END[\struct{X_s}]{E_s} \ar[r] & \cat{A}t(E_s)(- \log Y_s) \ar[r]^{\sigma_{1s}} & \cat{T}_{X_s}(- \log Y_s) \ar[r] & 0 
}
\end{equation}
where the bottom sequence is the logarithmic Atiyah sequence of the holomorphic vector
bundle $E_s$ over $X_s$  singular along $Y_s$ and $\sigma_{1s}$ is the restriction
of the symbol map $\sigma_1$.

Suppose that $E$ admits a relative logarithmic connection, which is 
equivalent to saying that the relative logarithmic Atiyah sequence in \eqref{eq:3} splits 
holomorphically. If $$\nabla\,:\,\cat{T}_{X/S}(- \log Y)\,\longrightarrow\,\cat{A}t_S(E)(- \log Y)$$ is 
a holomorphic splitting of the relative logarithmic Atiyah sequence in \eqref{eq:3},
then for every $s \,\in\, T$, the restriction of
$\nabla$ to $\cat{T}_{X_s}(- \log Y_s)$ gives an $\struct{X_s}$-module homomorphism
$$\nabla_s\,:\, \cat{T}_{X_s}(- \log Y_s) \,\longrightarrow\, \cat{A}t(E_s)(- \log Y_s).$$
Now, $\nabla_s$ is a 
holomorphic splitting of the logarithmic  Atiyah sequence of the holomorphic vector 
bundle $E_s$, which follows from the fact that the restriction maps $r_s$
defined above are surjective. 
Note that if $s \notin T$, then $\nabla_s$ is nothing 
but the holomorphic connection in $E_s$, because $Y_s = \emptyset$.

Thus, we have the following:
\begin{proposition}\label{pro:3J}
Let $\pi\,:\, X \,\longrightarrow\, S$ be a surjective holomorphic proper
submersion with connected fibers and $\varpi\,:\, E \,\longrightarrow\, X$ a 
holomorphic vector bundle. 
Let $Y/T$ be the relative SNC divisor over $X$.
Let $\nabla$ be a relative logarithmic connection 
on $E$. Then for every $s \,\in\, T$, we have a logarithmic connection $\nabla_s$
on the holomorphic vector bundle $E_s\,\longrightarrow\, X_s$, . In other words, we have a
family $\{\nabla_s\,\mid\, s \in S\}$ which consists of  logarithmic and holomorphic connections on the holomorphic 
family of vector bundles $\{E_s\,\longrightarrow\, X_s\,\mid\, s\in S\}$ depending on $s \in T$ and $s \notin T$.
\end{proposition}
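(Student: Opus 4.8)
The plan is to make precise the informal argument already sketched in the paragraphs preceding the statement, whose content is essentially: (i) a relative logarithmic connection on $E$ is the same datum as a holomorphic splitting of the relative logarithmic Atiyah sequence \eqref{eq:3}; (ii) the restriction maps $r_s$ fit into the commutative diagram \eqref{eq:cd1} with the ordinary logarithmic Atiyah sequence of $E_s$ on the bottom row; (iii) restricting the splitting along $r_s$ produces a splitting of the bottom row, hence a logarithmic connection $\nabla_s$ on $E_s$; and (iv) when $s \notin T$ one has $Y_s = \emptyset$, so the bottom sequence is the ordinary Atiyah sequence and $\nabla_s$ is a genuine holomorphic connection. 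So the proof is mostly a matter of organizing these steps and checking compatibilities.

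First I would recall, following \cite{BS} and \cite{D}, the standard identification: giving a relative logarithmic connection $D \colon E \longrightarrow E \otimes \Omega^1_{X/S}(\log Y)$ is equivalent to giving an $\struct{X}$-linear splitting $\nabla \colon \cat{T}_{X/S}(-\log Y) \longrightarrow \cat{A}t_S(E)(-\log Y)$ of \eqref{eq:3}, i.e. with $\sigma_1 \circ \nabla = \id{\cat{T}_{X/S}(-\log Y)}$; the passage between the two pictures is the usual contraction/pairing between $\cat{T}_{X/S}(-\log Y)$ and $\Omega^1_{X/S}(\log Y)$. Then I fix $s \in T$ and restrict $\nabla$ along the vertical maps of \eqref{eq:cd1}. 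The key point to verify is that the composite $r_s \circ \nabla \colon \cat{T}_{X/S}(-\log Y) \longrightarrow \cat{A}t(E_s)(-\log Y_s)$ factors through $r_s \colon \cat{T}_{X/S}(-\log Y) \longrightarrow \cat{T}_{X_s}(-\log Y_s)$; this follows because $\END[\struct{X_s}]{E_s}$ is the kernel of $\sigma_{1s}$ and, by commutativity of the right-hand square of \eqref{eq:cd1}, $\sigma_{1s}(r_s \nabla v) = r_s(\sigma_1 \nabla v) = r_s(v)$, so two local lifts of the same $r_s(v)$ differ by a section of $\END[\struct{X_s}]{E_s}$, and one checks the resulting map is well defined and $\struct{X_s}$-linear using that $r_s$ is surjective on each term (as noted in the excerpt, the restriction of $T(X/S)$ to $X_s$ is $T(X_s)$, and similarly for the logarithmic twists since $X_s$ meets $Y$ transversally). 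The induced map $\nabla_s \colon \cat{T}_{X_s}(-\log Y_s) \longrightarrow \cat{A}t(E_s)(-\log Y_s)$ then satisfies $\sigma_{1s} \circ \nabla_s = \id{}$, i.e. it is a holomorphic splitting of the logarithmic Atiyah sequence of $E_s$, equivalently a logarithmic connection on $E_s$ singular along $Y_s$.

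Finally I treat the case $s \notin T$: here $Y_s = X_s \cap Y = \emptyset$, so $\struct{X}(-Y)|_{X_s} = \struct{X_s}$, $\cat{T}_{X_s}(-\log Y_s) = \cat{T}_{X_s}$, $\cat{A}t(E_s)(-\log Y_s) = \cat{A}t(E_s)$, and the bottom row of \eqref{eq:cd1} degenerates to the ordinary Atiyah exact sequence of $E_s$. The same restriction procedure then yields a splitting of that sequence, i.e. an honest holomorphic connection on $E_s$. Collecting the two cases gives the family $\{\nabla_s \mid s \in S\}$ with the stated dichotomy. I expect the only genuine subtlety — the "main obstacle", though a mild one — to be the careful justification of transversality and of the compatibility of the logarithmic twists under restriction (i.e. that $\Omega^1_{X/S}(\log Y)|_{X_s} \cong \Omega^1_{X_s}(\log Y_s)$ for $s \in T$), so that the bottom row of \eqref{eq:cd1} really is the logarithmic Atiyah sequence of $E_s$; everything else is bookkeeping with the commutative diagram and the surjectivity of the restriction maps.
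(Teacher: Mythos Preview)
Your proposal is correct and follows essentially the same approach as the paper: the paper's proof consists precisely of the paragraphs preceding the proposition (culminating in ``Thus, we have the following:''), which set up the commutative diagram \eqref{eq:cd1}, identify a relative logarithmic connection with a holomorphic splitting of the top row, and invoke surjectivity of the restriction maps $r_s$ to obtain a splitting of the bottom row, noting separately that $Y_s=\emptyset$ for $s\notin T$. You are simply more explicit about the factorization of $r_s\circ\nabla$ and the compatibility $\Omega^1_{X/S}(\log Y)|_{X_s}\cong\Omega^1_{X_s}(\log Y_s)$, points the paper leaves implicit.
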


\section{A sufficient condition for existence of logarithmic connections}
In this section, we prove the equivalent assertions for a holomorphic vector bundle to admit a relative logarithmic connections. Further, we give a sufficient condition for existence of relative logarithmic connections. 
\begin{theorem}
\label{thm:1}
 Let $\pi : X \longrightarrow S$ be a surjective holomorphic proper submersion of complex manifolds with connected fibres and $E$ be a holomorphic vector bundle on $X$.  Let $Y/T$ be the relative SNC divisor over $X$. Then the followings are equivalent:
 \begin{enumerate}
 \item The exact sequence 
 $$0\,\longrightarrow\, \END[\struct{X}]{E}\,\stackrel{\imath}{\longrightarrow}\, \cat{A}t_S(E)(-\log Y)
\,\stackrel{\sigma_1}{\longrightarrow}\, 
\cat{T}_{X/S}(- \log Y) \,\longrightarrow\, 0,$$
 splits holomorphically.
 \item $E$ admits a relative logarithmic connection singular along $Y$.
 \item The extension class $\at[S]{E}(\log Y) \in \coh{1}{X}{\Omega^1_{X/S}(\log Y) \otimes \END[\struct{X}]{E}}$ is zero.
 \end{enumerate}
\end{theorem}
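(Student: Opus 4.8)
The statement is the relative logarithmic analogue of the classical dictionary relating connections, splittings of the Atiyah sequence and vanishing of the Atiyah class, so the plan is simply to run that formal argument in the present setting, proving the cycle $(2)\Rightarrow(1)\Rightarrow(3)\Rightarrow(1)\Rightarrow(2)$. For $(1)\Leftrightarrow(2)$ I would first make precise that $\cat{A}t_S(E)(-\log Y)$, as defined through the symbol map $\sigma_1$, is the subsheaf of $\DIFF[1]{S}{E}{E}$ consisting of first order $S$-differential operators $P$ whose symbol lies in $\id{E}\otimes\cat{T}_{X/S}(-\log Y)$, equivalently those $P$ for which there is a local section $v$ of $\cat{T}_{X/S}(-\log Y)$ with $P(fs)-fP(s)=v(f)\,s$ for all local sections $f$ of $\struct{X}$ and $s$ of $E$. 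Given an $\struct{X}$-linear splitting $\nabla$ of $\sigma_1$ in \eqref{eq:3}, set $\nabla_v:=\nabla(v)$; then $v\mapsto\nabla_v$ is $\struct{X}$-linear and $\nabla_v(fs)=f\nabla_v(s)+v(f)\,s$, and via the duality $\HOM[\struct{X}]{\cat{T}_{X/S}(-\log Y)}{\struct{X}}\cong\Omega^1_{X/S}(\log Y)$ from Section \ref{pre} together with $\diff_{X/S}(f)(v)=v(f)$, the operator $D(s)(v):=\nabla_v(s)$ is a first order holomorphic differential operator $E\to E\otimes\Omega^1_{X/S}(\log Y)$ satisfying $D(fs)=fD(s)+\diff_{X/S}(f)\otimes s$, i.e.\ a relative logarithmic connection singular along $Y$. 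Conversely, from such a $D$ one sets $\nabla(v)\colon s\mapsto D(s)(v)$, checks by the Leibniz rule that $\nabla(v)$ is a first order $S$-differential operator with symbol $v\otimes\id{E}$, hence a local section of $\cat{A}t_S(E)(-\log Y)$, and that $\sigma_1\circ\nabla=\id{}$; these two assignments are mutually inverse.

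For $(1)\Leftrightarrow(3)$ I would invoke the general fact that a short exact sequence of locally free $\struct{X}$-modules $0\to A\to B\xrightarrow{q}C\to 0$ admits an $\struct{X}$-linear section of $q$ if and only if its extension class in $\coh{1}{X}{\HOM[\struct{X}]{C}{A}}$ vanishes. Concretely, applying $\HOM[\struct{X}]{\cat{T}_{X/S}(-\log Y)}{-}$ to \eqref{eq:3} and passing to the long exact cohomology sequence, the image of $\id{}\in\coh{0}{X}{\HOM[\struct{X}]{\cat{T}_{X/S}(-\log Y)}{\cat{T}_{X/S}(-\log Y)}}$ under the connecting homomorphism is, by definition, the relative logarithmic Atiyah class $\at[S]{E}(\log Y)$; it vanishes precisely when $\id{}$ lifts to a global homomorphism $\cat{T}_{X/S}(-\log Y)\to\cat{A}t_S(E)(-\log Y)$, that is, precisely when \eqref{eq:3} splits holomorphically. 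Under the identification $\HOM[\struct{X}]{\cat{T}_{X/S}(-\log Y)}{\END[\struct{X}]{E}}\cong\Omega^1_{X/S}(\log Y)\otimes\END[\struct{X}]{E}$ recalled in Section \ref{pre}, this class is the element of $\coh{1}{X}{\Omega^1_{X/S}(\log Y)\otimes\END[\struct{X}]{E}}$ appearing in (3), which closes the cycle.

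Nothing in this is deep: it is the Atiyah-class formalism of \cite{A}, transported via \cite{BS} and \cite{D} to the relative logarithmic category. The only points needing genuine care are the bookkeeping ones --- verifying that restricting a section of $\cat{A}t_S(E)(-\log Y)$ against a relative logarithmic vector field, and dually pairing an $\Omega^1_{X/S}(\log Y)$-valued operator against such fields, does not leave the sheaves occurring in \eqref{eq:3} (which rests on $\cat{T}_{X/S}(-\log Y)$ and $\Omega^1_{X/S}(\log Y)$ being locally free and mutually $\struct{X}$-dual, as one reads from the local normal form of a relative SNC divisor in Subsection \ref{log_form}), and checking that the class produced by the connecting homomorphism really coincides with $\at[S]{E}(\log Y)$ under the chosen identification. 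Once these are in place the three conditions are formally equivalent.
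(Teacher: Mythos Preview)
Your proposal is correct and follows essentially the same route as the paper: for $(1)\Leftrightarrow(2)$ the paper also unpacks the symbol condition $\sigma_1(\nabla(\xi))=\xi\otimes\id{E}$ via the commutator formula $[\nabla(\xi),a]=\xi(a)\id{E}$ to obtain the Leibniz rule and conversely, and for $(1)\Leftrightarrow(3)$ it invokes the standard identification of holomorphic splittings with vanishing of the extension class in $\mathrm{Ext}^1(\cat{T}_{X/S}(-\log Y),\END[\struct{X}]{E})\cong\coh{1}{X}{\Omega^1_{X/S}(\log Y)\otimes\END[\struct{X}]{E}}$. Your write-up is slightly more explicit about the connecting-homomorphism description of the Atiyah class and the duality $\cat{T}_{X/S}(-\log Y)^\vee\cong\Omega^1_{X/S}(\log Y)$, but the argument is the same.
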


\begin{proof}
  $(i) \iff (ii)$ Suppose the Atiyah exact sequence splits holomorphically, i.e. there exists an $\mathcal{O}_X$-module homomophism $$\nabla : \cat{T}_{X/S}(- \log Y) \longrightarrow \cat{A}t_S(E)(-\log Y)$$
  such that $\sigma_1 \circ \nabla = \id{\cat{T}_{X/S}(- \log Y)}$. For any open set $U\subset X$, for every $\xi \in \cat{T}_{X/S}(- \log Y)(U)$ and for every $a \in \mathcal{O}_X(-Y)(U)$, we then have
  $$ \sigma_1(\nabla_U(\xi))(a) = [ \nabla_U(\xi),a ] = \xi(a) \id{E},$$
  see \cite[Proposition 3.1]{BS} for the symbol map $\sigma_1$.
  This in particular implies that $$\nabla_U(\xi)(as) = a \nabla_U(\xi)(s)+\xi(a)s.$$
  This shows that $\nabla$ satisfies the Leibniz condition. Since $\cat{A}t_S(E)(-\log Y)$ is an $\mathcal{O}_X$ submodule of $\END[\struct{S}]{E}(-\log Y)$, we conclude that $\nabla$ indeed defines a relative logarithmic connection singular along $Y$.
  
  Conversely, any relative logarithmic connection singular along $Y$ satisfies Leibniz property. 
  In particular, this will give a holomorphic splitting of the Atiyah exact sequence.
  
  $(i)\iff (iii)$ The splitting of the exact sequence 
 $$0\,\longrightarrow\, \END[\struct{X}]{E}\,\stackrel{\imath}{\longrightarrow}\, \cat{A}t_S(E)(-\log Y)
\,\stackrel{\sigma_1}{\longrightarrow}\, 
\cat{T}_{X/S}(- \log Y) \,\longrightarrow\, 0,$$
 is given by the vanishing of the extension class $$\at[S]{E}(\log Y) \in \text{Ext}^1 (\cat{T}_{X/S}(-\log Y), \END[\struct{X}]{E}).$$
 
 Note that $$\text{Ext}^1 (\cat{T}_{X/S}(-\log Y), \END[\struct{X}]{E}) = \coh{1}{X}{\Omega^1_{X/S}(\log Y) \otimes \END[\struct{X}]{E}}. $$
 This proves the theorem.
\end{proof}

\begin{theorem}
\label{thm:2}
Let $\pi : X \longrightarrow S$ be a surjective holomorphic proper submersion of complex manifolds with connected fibres and $E$ be a holomorphic vector bundle on $X$.  Let $Y/T$ be the relative SNC divisor over $X$. Suppose that the vector bundle $E_s:=E\vert_{X_s}$ admits a logarithmic connection singular along $Y_s$ for each $s\in S$, and $$ \coh{1}{S}{ \pi_*(\Omega^1_{X/S}(\log Y) \otimes \END[\struct{X}]{E})} = 0.$$ Then, $E$ admits a relative logarithmic connection singular along $Y$.
\end{theorem}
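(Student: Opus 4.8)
The plan is to reduce the problem to Theorem \ref{thm:1} by showing that the obstruction class $\at[S]{E}(\log Y) \in \coh{1}{X}{\Omega^1_{X/S}(\log Y) \otimes \END[\struct{X}]{E}}$ vanishes, and to do this by analysing the Leray spectral sequence for $\pi$ applied to the sheaf $\mathcal{F} := \Omega^1_{X/S}(\log Y) \otimes \END[\struct{X}]{E}$. First I would write down the low-degree exact sequence of this spectral sequence,
\begin{equation*}
0 \longrightarrow \coh{1}{S}{\pi_*\mathcal{F}} \longrightarrow \coh{1}{X}{\mathcal{F}} \longrightarrow \coh{0}{S}{R^1\pi_*\mathcal{F}} \longrightarrow \coh{2}{S}{\pi_*\mathcal{F}}.
\end{equation*}
The hypothesis $\coh{1}{S}{\pi_*\mathcal{F}} = 0$ kills the first term, so the edge map $\coh{1}{X}{\mathcal{F}} \hookrightarrow \coh{0}{S}{R^1\pi_*\mathcal{F}}$ is injective. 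Thus it suffices to show that the image of $\at[S]{E}(\log Y)$ in $\coh{0}{S}{R^1\pi_*\mathcal{F}}$ is zero.

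The second step is to identify the stalk (or the value on a suitable small open set) of $R^1\pi_*\mathcal{F}$ with the cohomology of the fibres. By the base change / proper mapping considerations available since $\pi$ is proper, one relates the germ of $R^1\pi_*\mathcal{F}$ at $s \in S$ to $\coh{1}{X_s}{\restrict{\mathcal{F}}{X_s}}$. Using the commutative diagram \eqref{eq:cd1} of restricted Atiyah sequences and the compatibility of the restriction maps $r_s$ with the extension classes, the image of $\at[S]{E}(\log Y)$ under restriction to $X_s$ is precisely the logarithmic Atiyah class $\mathrm{at}(E_s)(\log Y_s) \in \coh{1}{X_s}{\Omega^1_{X_s}(\log Y_s) \otimes \END[\struct{X_s}]{E_s}}$ of the fibrewise bundle. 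By hypothesis $E_s$ admits a logarithmic connection singular along $Y_s$ for every $s$, so by the (classical, non-relative) analogue of Theorem \ref{thm:1} this fibrewise Atiyah class vanishes for all $s$. Hence the image of $\at[S]{E}(\log Y)$ in every stalk of $R^1\pi_*\mathcal{F}$ is zero, so its image in $\coh{0}{S}{R^1\pi_*\mathcal{F}}$ (a subsheaf of the product of stalks, as it is a section of a sheaf) is zero, and injectivity of the edge map forces $\at[S]{E}(\log Y) = 0$. Then Theorem \ref{thm:1} gives the desired relative logarithmic connection.

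The main obstacle I expect is the second step: making the identification of $R^1\pi_*\mathcal{F}$ with fibrewise $\coh{1}{}{}$ precise enough to conclude that a global section vanishing in each stalk is the zero section. A clean way is to avoid base-change subtleties altogether and argue sheaf-theoretically: a section of $R^1\pi_*\mathcal{F}$ over $S$ is zero if and only if its germ at every point is zero, and the germ at $s$ maps (via the natural map $(R^1\pi_*\mathcal{F})_s \to \coh{1}{X_s}{\restrict{\mathcal{F}}{X_s}}$, or directly via $\coh{1}{\pi^{-1}(U)}{\mathcal{F}} \to \coh{1}{X_s}{\restrict{\mathcal{F}}{X_s}}$ for $U$ shrinking to $s$) compatibly with restriction of extension classes. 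One must check this compatibility: that the edge homomorphism composed with restriction to the fibre sends $\at[S]{E}(\log Y)$ to $\mathrm{at}(E_s)(\log Y_s)$. This is a diagram chase using \eqref{eq:cd1} together with the fact that extension classes are natural with respect to pullback of short exact sequences along the inclusion $X_s \hookrightarrow X$, and it is exactly the place where the relative SNC hypothesis (ensuring $\restrict{\Omega^1_{X/S}(\log Y)}{X_s} = \Omega^1_{X_s}(\log Y_s)$ and $\restrict{\cat{T}_{X/S}(-\log Y)}{X_s} = \cat{T}_{X_s}(-\log Y_s)$) is used. Once this naturality is in hand, the argument closes.
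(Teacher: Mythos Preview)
Your proposal is correct and follows essentially the same route as the paper: both arguments identify the obstruction as the relative logarithmic Atiyah class in $\coh{1}{X}{\mathcal{F}}$, feed it into the low-degree exact sequence of the Leray spectral sequence for $\pi$, use the fibrewise hypothesis to kill its image in $\coh{0}{S}{R^1\pi_*\mathcal{F}}$, and then invoke $\coh{1}{S}{\pi_*\mathcal{F}}=0$ to conclude. The paper packages the Atiyah class slightly differently (as $\Phi(1)$ for a connecting homomorphism coming from an auxiliary short exact sequence), but this is cosmetic; your direct use of Theorem~\ref{thm:1} and the naturality of extension classes under restriction to fibres is equivalent, and your discussion of the base-change subtlety is in fact more explicit than the paper's.
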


\begin{proof}
Consider the relative logarithmic  Atiyah exact sequence in \eqref{eq:3}. Now, tensoring it by
$\Omega^1_{X/S}(\log Y)$ gives  the following exact sequence
\begin{equation*}
0\,\longrightarrow\, \Omega^1_{X/S}(\log Y)(\END[\struct{X}]{E})\,\longrightarrow\, \Omega^1_{X/S}(\log Y)(\cat{A}t_S(E)(- \log Y))
\end{equation*}
\begin{equation}\label{q}
\,\stackrel{q}{\longrightarrow}\,\Omega^1_{X/S}(\log Y)\otimes \cat{T}_{X/S}(- \log Y)
\,\longrightarrow\, 0\, .
\end{equation}

We have  $\struct{X}\cdot \text{Id} \, \subset\, \text{End}(\cat{T}_{X/S}(- \log Y))\,=\,
\Omega^1_{X/S}(\log Y)\otimes \cat{T}_{X/S}(- \log Y)$. Define
$$
\Omega^1_{X/S}(\log Y)(\cat{A}t'_S(E))\, :=\, q^{-1}(\struct{X}\cdot \text{Id})\, \subset\,
\Omega^1_{X/S}(\log Y)(\cat{A}t_S(E)(- \log Y))\, ,
$$
where $q$ is the projection in \eqref{q}. So we have the short exact sequence of sheaves
\begin{equation}\label{q2}
0\,\longrightarrow\, \Omega^1_{X/S}(\log Y)(\END[\struct{X}]{E})\,\longrightarrow\, \Omega^1_{X/S}(\log Y)(\cat{A}t'_S(E))
\,\stackrel{q}{\longrightarrow}\,\struct{X}\,\longrightarrow\, 0
\end{equation}
on $X$, where $\Omega^1_{X/S}(\log Y)(\cat{A}t'_S(E))$ is constructed above. Let
\begin{equation}\label{q3}
\Phi\, :\, {\mathbb C}\, \subset \, {\rm H}^0(X,\, \struct{X}\cdot \text{Id})\, \longrightarrow\,
{\rm H}^1(X,\, \Omega^1_{X/S}(\log Y)(\END[\struct{X}]{E}))
\end{equation}
be the homomorphism in the long exact sequence of cohomologies associated to the exact sequence
in \eqref{q2}. The relative Atiyah class $\at[S]{E}(\log Y)$ (see Theorem \ref{thm:1}) coincides
with $\Phi(1)\, \in\, {\rm H}^1(X,\, \Omega^1_{X/S}(\log Y)(\END[\struct{X}]{E}))$. Therefore, from
Theorem \ref{thm:1}, it follows that 
$E$ admits a relative logarithmic connection if and only if
\begin{equation}\label{q4}
\Phi(1)\, =\,0\, .
\end{equation}

Note that
${\rm H}^1(X,\, \Omega^1_{X/S}(\log Y)(\END[\struct{X}]{E}))$ fits in the following  exact sequence
\begin{equation*}
{\rm H}^1(S,\, \pi_*(\Omega^1_{X/S}(\log Y)(\END[\struct{X}]{E})))\, \stackrel{\beta_1}{\longrightarrow}\,
{\rm H}^1(X,\, \Omega^1_{X/S}(\log Y)(\END[\struct{X}]{E}))\,
\end{equation*}
\begin{equation}\label{q5}
\stackrel{q_1}{\longrightarrow}\,
{\rm H}^0(S,\, R^1\pi_*(\Omega^1_{X/S}(\log Y)(\END[\struct{X}]{E})))\, ,
\end{equation}
where $\pi$ is the projection of $X$ to $S$.

The given condition that for every
$s \in S$, there is a logarithmic connection on the holomorphic vector bundle
$\varpi|_{E_s}\, :\, E_s \,\longrightarrow\, X_s$, implies that
$$
q_1(\Phi(1))\,=\, 0\, ,
$$
where $q_1$ is the homomorphism in \eqref{q5}. Therefore, from the exact sequence in
\eqref{q5} we conclude that
$$
\Phi(1)\, \in\, \beta_1({\rm H}^1(S,\, \pi_*(\Omega^1_{X/S}(\log Y)(\END[\struct{X}]{E}))))\, .
$$
Finally, the given condition that ${\rm H}^1(S,\, \pi_*(\Omega^1_{X/S}(\log Y)(\END[\struct{X}]{E})))\,=\, 0$.
implies that $\Phi(1)\, =\, 0$. Since \eqref{q4} holds, the vector bundle
$E$ admits a relative logarithmic connection.
\end{proof}

\section{Relative Chern classes in terms of relative residue}
\label{rel_residue}
In this section, we express the relative Chern classes 
in terms of relative residues which generalises \cite[Theorem 3]{O} due to Ohtsuki in the relative context.
\subsection{Relative residue:}
We define the relative residues of a relative 
logarithmic connection $D$ on $E$. 
Let $$Y=\bigcup_{j \in J} Y_j$$ be the decomposition of 
$Y$ into it's irreducible components, and 
$$\tau_j:Y_j\longrightarrow X$$ the inclusion map for every $j \in J$. Since $Y$ is a normal crossing divisor on $X$, we can choose a fine open cover $\{ U_{\lambda}:\lambda \in \Lambda \}$ of $X$ such that for each $\lambda \in \Lambda$, we have the following:
\begin{enumerate}
\item each $E\vert_{U_{\lambda}}$ is trivial,
\item for each irreducible component $Y_j$ of $Y$ with $Y_j\cap U_{\lambda} \neq \emptyset$, we can choose a local coordinate function $f_{\lambda j} \in \mathcal{O}_X(U_{\lambda})$ for a local coordinate system on $U_{\lambda}$, such that $f_{\lambda j}$ is a defining equation of $Y_j\cap U_{\lambda}$. If $Y_j\cap U_{\lambda} = \emptyset$, then we take $f_{\lambda j} = 1$.
\end{enumerate}

Let $e_{\lambda} = (e_{1 \lambda}, \ldots, e_{r \lambda})$ be the local frame of $E$, and  $\omega_{\lambda}$  the relative connection matrix of
$D$ with respect to a holomorphic local frame $e_{\lambda}$ for $E$ on $U_{\lambda}$, that is, we have
$$D(e_{\lambda}) = \omega_{\lambda}\otimes e_{\lambda},$$
where $\omega_{\lambda}$ is the $r\times r$ matrix whose entries are holomorphic sections of $\Omega^1_{X/S}(\log Y)$ over $U_{\lambda}$. For each $Y_j$, the matrix $\omega_{\lambda}$ can be written as 
$$\omega_{\lambda} = R_{\lambda j} \frac{df_{\lambda j}}{f_{\lambda j}} + S_{\lambda j},$$
where $R_{\lambda j}$ is an $r\times r$ matrix with entries in $\mathcal{O}_X(U_{\lambda})$ and $S_{\lambda j}$ is a $r\times r$ matrix with entries in $\bigl(\Omega^1_{X/S}(\log Y)\bigr)(U_{\lambda})$ with simple pole along $\bigcup\limits_{i\neq j}Y_i$.

Then 
$$\text{Res}_{X/S}(\omega_{\lambda},Y_j) := R_{\lambda j}$$
is an $r \times r$ matrix whose entries are holomorphic functions on $U_\lambda \cap Y_j$
and it is independent of
choice of local defning equation $f_{\lambda j}$
 for $Y_j$.
Then $\{\text{Res}_{X/S}(\omega_{\lambda},Y_j)\}_{\lambda \in \Lambda}$ defnes a holomorphic
global section
\begin{equation}
\label{eq:5}
\text{Res}_{X/S}(D, Y_j) \in \coh{0}{Y_j}{\END[\struct{X}]{E} \vert_{Y_j}}
\end{equation}
called the \textbf{relative residue} of the relative connection $D$ along $Y_j$.

For every $s \in T$, we have the decomposition 
$$Y_s = Y \cap \pi^{-1}(s) = \bigcup_{j \in J} (Y_j \cap \pi^{-1}(s))$$
of $Y_s$ into its irreducible components.
We denote $Y_j \cap \pi^{-1}(s)$ by $Y_{js}$. 

Recall that for a given relative logarithmic connection 
$D$ on $E$ singular over $Y/T$, we get a logarithmic connections 
$D_s$ on $E_s := E \vert_{X_s}$ singular over $Y_s$
for every $s \in T$.
Then we have residue (see \cite{O}) of $D_s$ over each irreducible component $Y_{js} $ of $Y_s$ denoted as 
\begin{equation}
\label{eq:7.5}
\Res[X_s]{D_s}{Y_{js}} \in \coh{0}{Y_{js}}{\END[\struct{X_s}]{E_s} \vert_{Y_{js}}},
\end{equation}
for every $s \in T$.

\subsection{Relative Chern class}

We recall the  definition of the relative Chern classes
of a holomorphic vector bundle over $\pi : X \to S$, for more details see \cite[Section 4.6]{BS}.

Let $E$ be a hermitian holomorphic vector bundle on $X$, 
that is, $E$ is a holomorphic vector bundle with Hermitian 
metric on it. Then there exists  canonical (smooth)
connection $\nabla$ on $E$ compatible with the Hermitian 
metric. 

Let $\cat{A}^r_{X/S}$ denote the sheaf of \emph{complex
valued smooth relative $r$-form} on $X$ over $S$.
Then we have relative de Rham complex 
$$
0\,\longrightarrow\,\pi^{-1}\cat{C}^\infty_{S}\,\longrightarrow\,\cat{C}^\infty_{X} \xrightarrow{d_{X/S}}
\cat{A}^1_{X/S} \xrightarrow{d_{X/S}} \cdots \xrightarrow{d_{X/S}}
\cat{A}^{2l}_{X/S} \,\longrightarrow\, 0
$$
of $\cat{C}^\infty_{X}$-module and $S$-linear maps, which we denote by the pair $(\cat{A}^\bullet_{X/S}, d_{X/S})$.

Moreover, because of the following short exact sequence
$$0\longrightarrow \pi^*\cat{A}^1_{S}\longrightarrow \cat{A}^1_X\longrightarrow \cat{A}^1_{X/S}\longrightarrow 0,$$
we get a relative smooth connection $D$ on $E$ induced from $\nabla$.

Given a relative smooth connection  $D$ on $E$. Let $(U_\alpha,\,h_\alpha)$ be a trivialization
of $E$ over $U_\alpha\,\subset\, X$.
Let $R$ be the $S$-curvature (relative curvature)  form for $D$, and let $\Omega_\alpha\,=\, (\Omega_{ij\alpha})$ 
be the curvature matrix of $D$ over $U_\alpha$,  so
$\Omega_{ij\alpha}\,\in\,\cat{A}^2_{X/S}(U_\alpha)$. We have 
$\Omega_\beta\,=\, g_{\alpha \beta}^{-1} \Omega_{\alpha} g_{\alpha \beta}$, 
where $g_{\alpha \beta}\,:\, U_{\alpha} \cap U_{\beta}\,\longrightarrow\, {\rm GL}_r(\C)$ is the 
change of frame matrix (transition function), which is a smooth map.

Consider the adjoint action of ${\rm GL}_r(\C)$ on it Lie algebra $\mathfrak{gl}_r(\C)\,=\,
{\rm M}_r(\C)$. Let $f$ be a ${\rm GL}_r(\C)$-invariant homogeneous 
polynomial on $\mathfrak{gl}_r(\C)$ of degree $p$. Then, we can associate a unique
$p$-multilinear symmetric map $\widetilde{f}$ on $\mathfrak{gl}_r(\C)$ 
such that $f(X)\,=\, \widetilde{f}(X,\cdots ,X)$, for all $X \in \mathfrak{gl}_r(\C)$.
Define $$\gamma_\alpha \,=\, \widetilde{f}(\Omega_\alpha,\,\cdots, \,\Omega_\alpha)
\,=\, f(\Omega_\alpha)\,\in\,
\cat{A}^{2p}_{X/S}(U_\alpha)\, .$$
Since $f$ is ${\rm GL}_r(\C)$-invariant, it follows that $\gamma_\alpha$ is independent of
the choice of frame, and hence it
defines a global smooth relative differential form of degree $2p$, which we denote by the symbol
$\gamma \in \cat{A}^{2p}_{X/S}(X)$.

\begin{theorem}\cite[Theorem 4.9]{BS}
\label{thm:5}
Let $\pi\,:\, X\,\longrightarrow\,S$ be a surjective holomorphic proper 
submersion of complex manifolds with connected fibers and $\varpi\,:\, E \,\longrightarrow\, X$ a differentiable
family of complex vector bundle. Let $D$ be a relative smooth connection on $E$.
Suppose that $f$ is a ${\rm GL}_r(\C)$-invariant 
polynomial function on $\mathfrak{gl}_r(\C)$ of degree $p$. Then the following hold:
\begin{enumerate}
\item $\gamma\,=\, f(\Omega_\alpha)$ is $d_{X/S}$-closed, that is,
$d_{X/S}(\gamma) \,=\, 0$.

\item The image $[\gamma]$ of $\gamma$ in the relative de Rham cohomology group
$$\gcoh{2p}{\Gamma(X,\,\cat{A}^\bullet_{X/S} )} \,=\, 
\coh{2p}{X}{\pi^{-1}\cat{C}^\infty_{S}}$$
is independent of the relative smooth connection $D$ on $E$.
\end{enumerate}
\end{theorem}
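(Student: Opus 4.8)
The idea is to carry out the classical Chern--Weil argument verbatim, with the exterior derivative $d$ replaced by the relative derivative $d_{X/S}$. In a trivialization $(U_\alpha, h_\alpha)$ of $E$, write $\omega_\alpha$ for the relative connection matrix of $D$, so that the relative curvature matrix is $\Omega_\alpha = d_{X/S}\omega_\alpha + \omega_\alpha\wedge\omega_\alpha$, with entries in $\cat{A}^2_{X/S}(U_\alpha)$. Two preliminary facts are needed. First, the \emph{relative Bianchi identity}
\[
d_{X/S}\Omega_\alpha \,=\, \Omega_\alpha\wedge\omega_\alpha - \omega_\alpha\wedge\Omega_\alpha,
\]
which follows formally from $d_{X/S}^2 = 0$ and the graded Leibniz rule on the relative de Rham complex $(\cat{A}^\bullet_{X/S}, d_{X/S})$, exactly as in the absolute case. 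Second, the transformation rule $\Omega_\beta = g_{\alpha\beta}^{-1}\Omega_\alpha g_{\alpha\beta}$ recalled in the text, which together with the ${\rm GL}_r(\C)$-invariance of $f$ ensures that the local forms $\gamma_\alpha = f(\Omega_\alpha)$ patch to a global relative form $\gamma\in\cat{A}^{2p}_{X/S}(X)$.

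For part (1), differentiating $f(g^{-1}Xg) = f(X)$ at $g = I$ gives the infinitesimal invariance of the polarization, $\sum_{i=1}^p \widetilde{f}(A_1,\ldots,[B,A_i],\ldots,A_p) = 0$, which extends to matrix-valued relative forms with the appropriate Koszul signs. Since $d_{X/S}$ is a graded derivation and each $\Omega_\alpha$ has even degree,
\[
d_{X/S}\gamma_\alpha \,=\, p\,\widetilde{f}(d_{X/S}\Omega_\alpha, \Omega_\alpha,\ldots,\Omega_\alpha);
\]
substituting the relative Bianchi identity and invoking the (graded) infinitesimal invariance with $B = \omega_\alpha$ makes the right-hand side vanish. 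As $\gamma$ is globally defined, $d_{X/S}\gamma = 0$.

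For part (2), let $D_0, D_1$ be two relative smooth connections on $E$. Their difference $\eta := D_1 - D_0$ is a global section of $\Omega^1_{X/S}\otimes\END[\struct{X}]{E}$, so $D_t := D_0 + t\eta$, $t\in[0,1]$, is a smooth family of relative connections, with relative curvatures $\Omega_t$ depending polynomially on $t$. One checks that $\tfrac{d}{dt}\Omega_t = D_t\eta$ is the relative covariant exterior derivative of $\eta$, and that $D_t\Omega_t = 0$ by the relative Bianchi identity. Combining these with the fact that the invariance of $f$ makes $d_{X/S}$ act on $\widetilde{f}(\eta,\Omega_t,\ldots,\Omega_t)$ like a covariant derivative, one obtains
\[
\frac{d}{dt}f(\Omega_t) \,=\, p\,\widetilde{f}(D_t\eta,\Omega_t,\ldots,\Omega_t) \,=\, p\,d_{X/S}\,\widetilde{f}(\eta,\Omega_t,\ldots,\Omega_t),
\]
a global $d_{X/S}$-exact relative $2p$-form for each $t$. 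Integrating in $t$ gives $f(\Omega_1) - f(\Omega_0) = d_{X/S}\big(p\int_0^1\widetilde{f}(\eta,\Omega_t,\ldots,\Omega_t)\,dt\big)$, so $[\gamma]$ is independent of $D$ in $\gcoh{2p}{\Gamma(X,\cat{A}^\bullet_{X/S})}$; its identification with $\coh{2p}{X}{\pi^{-1}\cat{C}^\infty_S}$ is the relative de Rham theorem (\cite[Section 4.6]{BS}), the relative Poincar\'e lemma making $\cat{A}^\bullet_{X/S}$ a fine resolution of $\pi^{-1}\cat{C}^\infty_S$.

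The computations above are the standard ones, so the substance of the work is structural rather than computational: one must know that $d_{X/S}$ genuinely satisfies $d_{X/S}^2 = 0$ and the graded Leibniz rule on relative forms valued in $\END[\struct{X}]{E}$, that the affine path $D_t$ really consists of relative connections, and that the transgression $(2p-1)$-form obtained by integrating $\widetilde{f}(\eta,\Omega_t,\ldots,\Omega_t)$ over $t$ is a well-defined global smooth relative form. Granting these facts about the relative de Rham complex (established in \cite{BS}), the argument is the word-for-word transcription of absolute Chern--Weil theory, and the main point to be careful about is keeping the graded signs correct when transplanting the infinitesimal-invariance identity to relative-form coefficients.
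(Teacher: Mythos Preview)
The paper does not contain a proof of this statement: it is quoted verbatim as \cite[Theorem 4.9]{BS} and used as a black box in the construction of relative Chern classes. There is therefore nothing in the present paper to compare your argument against.

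That said, your proposal is correct and is exactly the argument one expects: the classical Chern--Weil proof goes through word for word once $d$ is replaced by $d_{X/S}$, since the only structural inputs are $d_{X/S}^2=0$, the graded Leibniz rule, and the $\mathrm{GL}_r(\C)$-invariance of $f$. This is presumably the proof given in \cite{BS} as well; your identification of the needed ingredients (relative Bianchi, infinitesimal invariance, the affine homotopy $D_t$, and the relative Poincar\'e lemma for the identification with $\coh{2p}{X}{\pi^{-1}\cat{C}^\infty_S}$) is complete and accurate.
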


Define homogeneous polynomials $f_p$ on $\mathfrak{gl}_r(\C)$, of degree 
$p\,=\, 1,\,2,\,\cdots ,\,r$, to be the coefficient of $\lambda^p$ in the following 
expression:
\begin{equation}
\label{eq:9}
\mathrm{det}(\lambda \mathrm{I} + \frac{\sqrt{-1}}{2 \pi} A) \,=\, \Sigma_{j=0}^{r}
\lambda^{r-j} f_j(\frac{\sqrt{-1}}{2 \pi} A)\, ,
\end{equation}
where $f_0(\frac{\sqrt{-1}}{2 \pi} A) \,=\, 1$ while $f_r(\frac{\sqrt{-1}}{2 \pi} A)$ is the 
coefficient of $\lambda^0$. These polynomials $f_1,\,\cdots,\, f_r$ are $GL_r(\C)$-invariant, and
they generate the algebra of ${\rm GL}_r(\C)$-invariant polynomials on 
$\mathfrak{gl}_r(\C)$. We now define the \emph{p-th cohomology class} as follows:
$$
c^{S}_p(E)\,=\, [f_p(\frac{\sqrt{-1}}{2 \pi} \Omega)]\,\in\, \gcoh{2p}{\Gamma(X,\,
\cat{A}^\bullet_{X/S})}
$$
for $p \,=\,0,\,1,\,\cdots ,\,r$. 
 
The relative de Rham cohomology sheaf $\drcoh{p}{dR}{X/S} \,\cong\, 
R^p \pi_\ast(\pi^{-1}\cat{C}^\infty_{S})$ on $S$ is by 
definition the sheafification of the presheaf $V\,\longmapsto\, 
\coh{p}{\pi^{-1}(V)}{\pi^{-1}\cat{C}^\infty_{S}|_{\pi^{-1}(V)}}$,
for open subset $V \,\subset\, S$. Therefore, we have a natural homomorphism
\begin{equation}
\label{eq:8}
\rho\,:\, \coh{2p}{X}{\pi^{-1}\cat{C}^\infty_{S}} \,\longrightarrow\,
\drcoh{2p}{dR}{X/S}(S)
\end{equation}
which maps $c^{S}_p(E)$ to $\rho( c^S_p(E))\,\in\, \drcoh{2p}{dR}{X/S}(S)$.

Define $C^S_p(E) \,=\, \rho(c^S_p(E))$. We call $C^S_p(E)$ the \emph{p-th 
relative Chern class of E over S}. 
Let $$C^S(E)\,=\, \sum_{p \geq 0} C^S_p(E)\,\in\, \drcoh{*}{dR}{X/S}(S)\,=\, 
\oplus_{k \geq 0} \drcoh{k}{dR}{X/S}(S)$$ be the \emph{total relative 
Chern class} of $E$.

\subsection{Relative Chern classes in terms of relative residue}
We follow the notations as above.
Let $J^k := J \times \cdots \times J$ be the $k$-fold product of $J$. Let $I = (i_1, \ldots, i_k) \in J^k$.
If there are $p$-different indices among $i_1, \ldots, i_k$, we denote them by $i^*_1, \ldots, i^*_p$, tuple is denoted by
$I^* = (i^*_1, \ldots, i^*_p)$.
Let $a_m$ be the number of $i^*_m$ appearing in $I$,
then we have 
$$\sum_{m =1}^p a_m = k.$$
For given $I \in J^k$, we define 
\begin{equation}
\label{eq:10}
Y_{I^*} = \bigcap_{m = 1}^p Y_{i^*_m}.
\end{equation}
Then either $Y_{I^*} = \emptyset$ or a submanifold of 
$X$ of codimension $p$.
Further, $Y_{I^*}$ need not be connected.
Let 
\begin{equation}
\label{eq:11}
Y_{I^*} = \bigcup_{\alpha} Y^{\alpha}_{I^*}
\end{equation}
be the disjoint union of connected components of $Y_{I^*}$. Then each $Y^\alpha_{I^*}$ is a submanifold of codimension $p$.

Let $\widetilde{f_k}$ be  the unique
$k$-multilinear symmetric map  on $\mathfrak{gl}_r(\C)$ 
such that $$f_k(A)\,=\, \widetilde{f_k}(A,\cdots ,A),$$ for all $A \in \mathfrak{gl}_r(\C)$, where $f_k$ is defined 
in \eqref{eq:9} for every $k = 0, \ldots, r$.
Now onwards we assume that $X$ is compact.

\begin{lemma}
\label{lem:1}
Let $\pi : X \longrightarrow S$ be a surjective holomorphic proper submersion of complex manifolds with connected fibres and $E$ be a holomorphic vector bundle on $X$. Assume that $X$ is compact. Let $Y/T$ be the relative SNC divisor over $X$.
Let $D$ be a relative logarithmic connection on $E$
singular over $Y/T$.
Then for any $I = (i_1, \ldots, i_k) \in J^k$, the following polynomial
$$\widetilde{f_k}(\Res[X/S]{D}{Y_{i_1}}, \Res[X/S]{D}{Y_{}i_2}, \ldots, \Res[X/S]{D}{Y_{i_k}})$$
is constant on each connected component $Y^{\alpha}_{I^*}$ of $Y_{I^*}$ described in \eqref{eq:11}.

\end{lemma}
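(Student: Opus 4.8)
The plan is to reduce the statement to a purely local computation on the divisor $Y_{I^*}^\alpha$ and then to show that the relevant local expression is not only well-defined but locally constant, hence constant on the connected component $Y_{I^*}^\alpha$. First I would fix a connected component $Y_{I^*}^\alpha$ and work in the fine open cover $\{U_\lambda\}$ chosen above; on each $U_\lambda$ meeting $Y_{I^*}^\alpha$ we have the relative connection matrix $\omega_\lambda$ and, for each of the indices $i^*_1,\dots,i^*_p$ occurring in $I$, the decomposition $\omega_\lambda = R_{\lambda i^*_m}\frac{df_{\lambda i^*_m}}{f_{\lambda i^*_m}} + S_{\lambda i^*_m}$. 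By definition $\mathrm{Res}_{X/S}(D, Y_{i^*_m})$ is represented on $U_\lambda\cap Y_{i^*_m}$ by $R_{\lambda i^*_m}$, which is a holomorphic $\mathrm{End}(E)$-valued function there; restricting further to $U_\lambda\cap Y_{I^*}^\alpha$ gives a holomorphic section. Since $\widetilde{f_k}$ is a fixed polynomial (a $k$-multilinear symmetric map), the expression $\widetilde{f_k}(R_{\lambda i_1}, \dots, R_{\lambda i_k})$ is a holomorphic function on $U_\lambda \cap Y_{I^*}^\alpha$ (note that the arguments $R_{\lambda i_t}$ for $t$ ranging over the $a_m$ repetitions of $i^*_m$ are all equal to $R_{\lambda i^*_m}$, so this is really $\widetilde{f_k}$ evaluated with multiplicities $a_1,\dots,a_p$). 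The first key point is that these local functions glue: on overlaps $U_\lambda\cap U_\mu$ the residue matrices transform by conjugation $R_{\mu i^*_m} = g_{\lambda\mu}^{-1} R_{\lambda i^*_m} g_{\lambda\mu}$ (this is exactly the compatibility underlying the fact that the $R_{\lambda j}$ define the global section $\mathrm{Res}_{X/S}(D,Y_j)$ of $\mathrm{End}(E)|_{Y_j}$ in \eqref{eq:5}), and since $f_k$ — and hence $\widetilde{f_k}$ — is $\mathrm{GL}_r(\mathbb{C})$-invariant under the adjoint action, $\widetilde{f_k}(R_{\lambda i_1},\dots,R_{\lambda i_k})$ is unchanged by this simultaneous conjugation. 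Thus we obtain a well-defined global holomorphic function, call it $\phi$, on $Y_{I^*}^\alpha$.

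Next I would argue that $\phi$ is constant. Here is where the hypothesis that $X$ is compact enters: $Y$ is a closed analytic subset of the compact manifold $X$, so each $Y_{I^*}^\alpha$ is a compact (connected) complex manifold, and any global holomorphic function on a compact connected complex manifold is constant by the maximum modulus principle. Therefore $\phi$ is a constant on $Y_{I^*}^\alpha$, which is precisely the assertion of the lemma. The only subtlety to spell out is that $\phi$ really is holomorphic as a function on the manifold $Y_{I^*}^\alpha$ and not merely on the smooth locus: but $Y_{I^*}^\alpha$ is by construction a connected component of $Y_{I^*} = \bigcap_{m} Y_{i^*_m}$, which by \eqref{eq:11} is a smooth submanifold of $X$ of codimension $p$, so there is no singular locus to worry about, and the local descriptions on $U_\lambda\cap Y_{I^*}^\alpha$ together with the gluing give a genuine global holomorphic function.

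I would organize the write-up in three short steps: (1) record that on each $U_\lambda$ the restricted residue matrices $R_{\lambda i^*_m}|_{Y_{I^*}^\alpha}$ are holomorphic $\mathrm{End}(E)$-valued functions, and that $\widetilde{f_k}(R_{\lambda i_1},\dots,R_{\lambda i_k})$ is therefore a holomorphic function on $U_\lambda\cap Y_{I^*}^\alpha$; (2) use the conjugation transformation law of residue matrices on overlaps together with $\mathrm{GL}_r(\mathbb{C})$-invariance of $\widetilde{f_k}$ to see that these local functions patch to a global holomorphic function $\phi$ on $Y_{I^*}^\alpha$; (3) invoke compactness of $X$, hence of the closed analytic connected submanifold $Y_{I^*}^\alpha$, to conclude $\phi$ is constant.

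The main obstacle is step (2): one must be careful that the transition behaviour of the individual residues $R_{\lambda j}$ is exactly conjugation by the transition functions of $E$ (with no extra additive term coming from the "$\frac{df_{\lambda j}}{f_{\lambda j}}$" part changing under a change of trivialization and change of local defining equation), so that invariance of $\widetilde{f_k}$ applies cleanly; this is already implicit in the statement \eqref{eq:5} that the $\{R_{\lambda j}\}_\lambda$ assemble into a global section of $\mathrm{End}_{\mathcal{O}_X}(E)|_{Y_j}$, so the cleanest route is to cite that fact and note that $\widetilde{f_k}$ applied to global sections of $\mathrm{End}(E)|_{Y_{I^*}^\alpha}$ (restricted along the inclusions $Y_{I^*}^\alpha \hookrightarrow Y_{i^*_m}$) produces a global function, sidestepping an explicit cocycle computation. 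Everything else is the standard maximum-principle argument.
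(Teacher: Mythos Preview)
Your proposal is correct and follows essentially the same approach as the paper: observe that $Y^{\alpha}_{I^*}$ is a compact connected complex submanifold of $X$, note that $\widetilde{f_k}$ applied to the (holomorphic) residue endomorphisms yields a global holomorphic function, and conclude constancy by the maximum principle. The paper's proof is a two-line sketch (``$X$ compact $\Rightarrow$ $Y^{\alpha}_{I^*}$ compact; $\widetilde{f_k}$ is a polynomial function''), so your careful treatment of the gluing via $\mathrm{GL}_r(\C)$-invariance simply fills in detail the paper leaves implicit.
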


\begin{proof}
Since $X$ is a compact complex manifold, each connected 
component $Y^{\alpha}_{I^*}$ is a compact complex submanifold  of $X$. Hence proof follows from the fact 
that $\widetilde{f_k}$ is a polynomial function.
\end{proof}

For the simplicity of the notation, we denote the constant number $$\widetilde{f_k}(\Res[X/S]{D}{Y_{i_1}}, \Res[X/S]{D}{Y_{i_2}}, \ldots, \Res[X/S]{D}{Y_{i_k}})$$
on each $Y^\alpha_{I^*}$ in the above Lemma \ref{lem:1}
by $\Res[X/S]{D} {Y_{I^*}}^{k, \alpha}$.

Let $W$ be a submanifold of $X$ of codimension $p$.
Then, we get a cohomology class $[W] \in \coh{2p}{X}{\C}$. 
Because of the following inclusion of sheaves 
$$\C \hookrightarrow \pi^{-1} \cat{C}^{\infty}_S, $$
we get a homomorphism 
\begin{equation}
\label{eq:11.1}
\gamma : \coh{2p}{X}{\C} \longrightarrow \coh{2p}{X}{ \pi^{-1} \cat{C}^{\infty}_S}
\end{equation}
on cohomology groups.
Further using the natural homomorphism 
\begin{equation*}
\rho\,:\, \coh{2p}{X}{\pi^{-1}\cat{C}^\infty_{S}} \,\longrightarrow\,
\drcoh{2p}{dR}{X/S}(S)
\end{equation*}
in \eqref{eq:8}, we define 
\begin{equation}
\label{eq:12}
C^p_S(W) := \rho (\gamma ([W]))
\end{equation}
call it the $p$-th relative Chern classes associated to 
$W$.

\begin{theorem}
\label{thm:3}
Let $\pi : X \longrightarrow S$ be a surjective holomorphic proper submersion of complex manifolds with connected fibres and $E$ be a holomorphic vector bundle on $X$. Assume that $X$ is compact. Let $Y/T$ be the relative SNC divisor over $X$.
Let $D$ be a relative logarithmic connection on $E$
singular over $Y/T$. Then, we have following relation in 
$\drcoh{2k}{dR}{X/S}(S)$
\begin{equation}
\label{eq:13}
C^S_k(E) = (-1)^k \{ \sum_{I \in J^k} \sum_{\alpha} \Res[X/S]{D}{Y_{I^*}}^{k, \alpha} C^p_S(Y^\alpha_{I^*})\} \prod_{m = 1}^p C^1_S(Y_{i^*_m})^{a_m -1},
\end{equation}
where $C^S_k(E)$ denote the $k$-th relative Chern class of $E$.
\end{theorem}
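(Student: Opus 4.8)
The plan is to reduce the relative statement to Ohtsuki's absolute theorem \cite[Theorem 3]{O} applied fibrewise, and then to patch the fibrewise identities together over $S$ using the sheaf-theoretic description of relative de Rham cohomology. First I would recall that, by Proposition \ref{pro:3J}, the relative logarithmic connection $D$ restricts for each $s \in S$ to a connection $D_s$ on $E_s \longrightarrow X_s$ which is logarithmic along $Y_s$ when $s \in T$ and holomorphic when $s \notin T$. Moreover, the relative residue $\Res[X/S]{D}{Y_j}$ restricts on $Y_{js}$ to the classical residue $\Res[X_s]{D_s}{Y_{js}}$ of \eqref{eq:7.5}, and the relative Chern form $f_k\bigl(\tfrac{\sqrt{-1}}{2\pi}\Omega\bigr)$ restricts on $X_s$ to the Chern form of $E_s$ computed from $D_s$. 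Thus on each fibre $X_s$ Ohtsuki's theorem gives, in $\coh{2k}{X_s}{\C}$, exactly the identity \eqref{eq:13} with $X/S$ replaced by $X_s$, $Y_{I^*}$ by $Y_{I^*} \cap X_s$, and the residue constants $\Res[X/S]{D}{Y_{I^*}}^{k,\alpha}$ by their restrictions (which, by Lemma \ref{lem:1}, are the same constants since $Y^\alpha_{I^*}$ is connected and meets $X_s$).

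Next I would globalize. The key observation is that $\drcoh{2k}{dR}{X/S} \cong R^{2k}\pi_*(\pi^{-1}\cat{C}^\infty_S)$, and the value on $S$ sits in the presheaf whose sections over $V \subset S$ are $\coh{2k}{\pi^{-1}(V)}{\pi^{-1}\cat{C}^\infty_S}$. Every term appearing in \eqref{eq:13} — the class $C^S_k(E)$, the classes $C^p_S(Y^\alpha_{I^*})$ and $C^1_S(Y_{i^*_m})$, and the locally constant residue coefficients — is the image under $\rho \circ \gamma$ (see \eqref{eq:11.1}, \eqref{eq:12}, \eqref{eq:8}) of an honest class in $\coh{*}{X}{\C}$, namely the ordinary Chern class of $E$, the cycle classes $[Y^\alpha_{I^*}]$ and $[Y_{i^*_m}]$, and scalars. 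So it suffices to prove the identity already in $\coh{2k}{X}{\C}$ before pushing it to $\drcoh{2k}{dR}{X/S}(S)$: the map $\rho\circ\gamma$ is a ring homomorphism (it is induced by the inclusion of constant sheaves $\C \hookrightarrow \pi^{-1}\cat{C}^\infty_S$ followed by the edge map $\rho$, both of which respect cup products), hence it carries the absolute identity to the relative one. But the absolute identity in $\coh{2k}{X}{\C}$ is exactly Ohtsuki's theorem applied to the holomorphic vector bundle $E$ on the compact complex manifold $X$ with its SNC divisor $Y$ and the logarithmic connection $D$ viewed as an \emph{honest} (not merely relative) logarithmic connection — and here one must be slightly careful, since $D$ is only a relative logarithmic connection. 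The fix is to choose locally any lift of $D$ to a genuine logarithmic connection (or simply to run Ohtsuki's Chern–Weil argument verbatim with the relative connection and relative curvature, which is legitimate because the relative de Rham complex computes $R\pi_*\C \otimes$ constants and all of Ohtsuki's local computations — the Bott-type vanishing on $X \setminus Y$, the residue expansion near each $Y_j$, the combinatorics of $J^k$ — are purely local and take place inside $\Omega^\bullet_{X/S}(\log Y)$).

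Concretely, the steps in order: (1) restate that $f_k\bigl(\tfrac{\sqrt{-1}}{2\pi}\Omega\bigr)$, with $\Omega$ the relative curvature of a relative smooth extension of $D$ adapted to $Y$ (chosen as in Ohtsuki to be the $C^\infty$ relative connection that is logarithmic-compatible near $Y$), represents $C^S_k(E)$ in $\drcoh{2k}{dR}{X/S}(S)$ by the definition recalled above and Theorem \ref{thm:5}; (2) carry out Ohtsuki's argument at the level of relative forms: away from $Y$ the relative curvature of $D$ is a holomorphic relative $2$-form built from $\omega_\lambda$, so $f_k$ of it, which has degree $2k$ in relative forms of type $(1,0)$, is annihilated — here one uses that $\Omega^q_{X/S}(\log Y)$ is locally free of rank $l$ and the relative Dolbeault/type reasoning forces $f_k(\Omega) = 0$ on $X\setminus Y$ exactly as in the absolute case; (3) run the residue expansion near each component, writing $\omega_\lambda = R_{\lambda j}\,df_{\lambda j}/f_{\lambda j} + S_{\lambda j}$, to express $[f_k(\tfrac{\sqrt{-1}}{2\pi}\Omega)]$ as the stated sum over $I \in J^k$ of the constants $\Res[X/S]{D}{Y_{I^*}}^{k,\alpha}$ times the cycle classes of the strata, with the multiplicity factors $\prod_m C^1_S(Y_{i^*_m})^{a_m-1}$ coming from the iterated-residue/self-intersection bookkeeping; (4) apply $\rho$ to land in $\drcoh{2k}{dR}{X/S}(S)$ and identify each $\rho(\gamma([Y^\alpha_{I^*}]))$ with $C^p_S(Y^\alpha_{I^*})$ and each $\rho(\gamma([Y_{i^*_m}]))$ with $C^1_S(Y_{i^*_m})$ by \eqref{eq:12}. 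The main obstacle I anticipate is step (2)–(3): one must check that Ohtsuki's local Chern–Weil computation, originally phrased for ordinary logarithmic connections and the full de Rham complex, goes through verbatim after replacing $\Omega^\bullet_X(\log Y)$ by the relative complex $\Omega^\bullet_{X/S}(\log Y)$ — i.e., that the relative exterior derivative $d_{X/S}$, the relative curvature, and the local splitting $\omega_\lambda = R_{\lambda j}\,df_{\lambda j}/f_{\lambda j} + S_{\lambda j}$ interact exactly as in the classical case, with the $S$-directions contributing nothing. This is plausible because $d_{X/S}$ is a derivation on $\Omega^\bullet_{X/S}(\log Y)$ with the same formal properties as $d$, and $df_{\lambda j}/f_{\lambda j}$ makes sense in $\Omega^1_{X/S}(\log Y)$; but it needs to be verified with care, in particular that the Grothendieck-type residue maps $\Res[X/S]{\cdot}{Y_j}$ are compatible with cup products and with passing to intersections $Y_{I^*}$ in the relative setting.
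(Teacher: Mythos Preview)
Your first paragraph is exactly the paper's strategy: restrict to fibres and invoke Ohtsuki's theorem on each $X_s$. Where you diverge is in the ``globalize'' step. You propose either (a) proving the identity already in $\coh{2k}{X}{\C}$ and pushing it down, or (b) rerunning Ohtsuki's Chern--Weil computation verbatim in the relative de Rham complex $\Omega^\bullet_{X/S}(\log Y)$. Approach (a) you correctly flag as problematic, since $D$ is not an absolute logarithmic connection; approach (b) would work in principle but requires redoing all of Ohtsuki's local analysis and checking the compatibilities you list at the end.

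The paper avoids both. Its globalization is purely sheaf-theoretic: since $X$ is compact and $\pi$ is proper, $\drcoh{2k}{dR}{X/S}$ is a locally free $\cat{C}^\infty_S$-module, and by topological proper base change one has for every $s\in S$ an isomorphism
\[
\eta\colon \drcoh{2k}{dR}{X/S}_s \otimes_{\cat{C}^\infty_{S,s}} k(s) \;\xrightarrow{\ \sim\ }\; \coh{2k}{X_s}{\C}.
\]
Two global sections of a locally free $\cat{C}^\infty_S$-module agree iff they agree in every fibre, so it suffices to check that $\eta$ sends the germ of each side of \eqref{eq:13} to the corresponding absolute expression on $X_s$. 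For the left side this is \cite[Corollary~4.11]{BS}: $\eta(C^S_k(E)_s\otimes 1)=c_k(E_s)$. For the right side, each $C^p_S(Y^\alpha_{I^*})$ and $C^1_S(Y_{i^*_m})$ is by construction the image of an absolute cycle class, so restricts to the cycle class of the intersection with $X_s$; and the residue constants restrict by the compatibility you already noted. One then cites \cite[Theorem~3]{O} on $X_s$ and is done. No Chern--Weil computation in the relative complex is needed.

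In short: your plan is workable but substantially harder. The paper trades the analytic redo of Ohtsuki for a one-line appeal to local freeness plus base change, which is what your phrase ``patch the fibrewise identities together'' should cash out as.
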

\begin{proof}
It is enough to show the formula \eqref{eq:13} stalkwise, and in particular stalks at $s \in T$. First note that 
for every $s \in S$, and inclusion morphism 
$j : X_s \hookrightarrow X$, we have a natural map (see \cite[Corollary 4.11]{BS} ) $$j^* \,:\,\drcoh{2k}{dR}{X/S}(S)
\,\longrightarrow\,\coh{2k}{X_s}{\C}$$ which maps the $k$-th relative Chern class of $E$ to
the $k$-th Chern class of the vector bundle $E_s \,\longrightarrow\, X_s$, that is,
$j^*(C_k^S(E)) \,=\, c_k(E_s)$, where $c_k(E_s)$ denote the 
$k$-th Chern class of $E_s$.

Note that $\drcoh{2k}{dR}{X/S}$ is a locally free 
$\cat{C}^\infty_{S}$-module, and using the topological proper base change theorem  given in \cite[p.~202, Remark 4.17.1]{G}
and \cite[p.~19, Corollary 2.25]{D},
we have a $\C$-vector space isomorphism
\begin{equation}
\label{eq:3.I.1}
\eta\,:\, \drcoh{2k}{dR}{X/S}_s \otimes_{\cat{C}^\infty_{{S},s}} k(s)
\, \longrightarrow\, \coh{2k}{X_s}{\C}
\end{equation}
for every $s \,\in\, S$. In fact, we have the following commutative diagram;
$$
\xymatrix{\drcoh{2k}{dR}{X/S}(S) \ar[rd]_{j^*} \ar[r] & 
\drcoh{2k}{dR}{X/S}_s \otimes_{\cat{C}^\infty_{{S},s}} k(s) 
\ar[d]^{\eta} \\
& \coh{2k}{X_s}{\C} \\ }
$$
Hence, we get 
\begin{equation}
\label{eq:14}
\eta(C_k^S(E)_s \otimes 1)\,=\, j^*(C_k^S(E)) \,=\, c_k(E_s).
\end{equation}

Let us fix the following notation for $s \in T$;
 $$Y_{s I^*} = Y_{I^*} \cap \pi^{-1}(s) = \bigcup_{\alpha} (Y^{\alpha}_{I^*} \cap \pi^{-1}(s)),$$ 
 $$Y^\alpha_{sI^*} = Y^{\alpha}_{I^*} \cap \pi^{-1}(s),$$ and 
 $$Y_{si^*_m} = Y_{i^*_m} \cap \pi^{-1}(s). $$

Now, note that the  germ at $s \in T$ of the right hand side of the formula \eqref{eq:13} is associated to the logarithmic connection $D_s$ on $E_s \longrightarrow X_s$, that is, we get the following expression 
\begin{equation}
\label{eq:15}
(-1)^k \{ \sum_{I \in J^k} \sum_{\alpha} \Res[X_s]{D_s}{Y_{sI^*}}^{k, \alpha} c_p(Y^\alpha_{sI^*})\} \prod_{m = 1}^p c_1(Y_{si^*_m})^{a_m -1},
\end{equation}

where $\Res[X_s]{D_s}{Y_{sI^*}}^{k, \alpha}$ denote the 
constant function $$\widetilde{f_k}(\Res[X_s]{D_s}{Y_{si_1}}, \Res[X_s]{D_s}{Y_{si_2}}, \ldots, \Res[X_s]{D_s}{Y_{si_k}})$$
on each connected component $Y^\alpha_{sI^*}$  of 
$Y_{s I^*}$.

 From \cite[Theorem 3]{O}, we have 
 \begin{equation}
 \label{eq:16}
 c_k(E_s) = (-1)^k \{ \sum_{I \in J^k} \sum_{\alpha} \Res[X_s]{D_s}{Y_{sI^*}}^{k, \alpha} c_p(Y^\alpha_{sI^*})\} \prod_{m = 1}^p c_1(Y_{si^*_m})^{a_m -1}.
 \end{equation}
 
 In view of  \eqref{eq:14}, \eqref{eq:15}, and \eqref{eq:16}, proof of the theorem is complete.
\end{proof}

\end{document}